\newtheorem{theorem}{Theorem}[section]
\newtheorem{lemma}[theorem]{Lemma}
\newtheorem{proposition}[theorem]{Proposition}
\theoremstyle{definition}
\newtheorem{remark}[theorem]{Remark}
\numberwithin{equation}{section}
\begin{document}

\title[The quantization error for self-affine measures]
{Asymptotic order of the quantization errors for self-affine measures on Bedford-McMullen
carpets}

\author{Sanguo Zhu}
\address{School of Mathematics and Physics, Jiangsu University
of Technology\\ Changzhou 213001, China.}
\email{sgzhu@jsut.edu.cn}

\subjclass[2000]{Primary 28A80, 28A78; Secondary 94A15}
\keywords{quantization error, convergence order, quantization coefficient, Self-affine measures, Bedford-McMullen
carpets.}

\begin{abstract}
Let $E$ be a Bedford-McMullen carpet determined by a set of affine mappings $(f_{ij})_{(i,j)\in G}$ and $\mu$ a self-affine measure on $E$ associated with a probability vector $(p_{ij})_{(i,j)\in G}$. We prove that, for every $r\in(0,\infty)$,
the upper and lower quantization coefficient are always positive and finite in its exact quantization dimension $s_r$. As a consequence, the $k$th quantization error for $\mu$ of order $r$ is of the same order as $k^{-\frac{1}{s_r}}$. In sharp contrast to the Hausdorff measure for Bedford-McMullen carpets, our result is independent of the horizontal fibres of the carpets.
\end{abstract}

\maketitle

\section{Introduction}
Let $m,n$ be two fixed positive integers with $2\leq m\leq n$. Let $G$ be a subset of $$\big\{0,1,\ldots,n-1\big\}\times\big\{0,1,\ldots,m-1\big\}$$
with $N:=\mbox{card}\left(G\right)\geq2$. We consider a family of affine
mappings on $\mathbb{R}^{2}$:
\begin{equation}
f_{ij}:(x,y)\mapsto\big(n^{-1}x+n^{-1}i,m^{-1}y+m^{-1}j\big),\;\;(i,j)\in G.\label{fi's}
\end{equation}
By \cite{Hut:81}, there exists a unique non-empty
compact set $E$ satisfying
\[
E=\bigcup_{(i,j)\in G}f_{ij}(E).
\]
The set $E$ is the self-affine set determined by $(f_{ij})_{(i,j)\in G}$. We also call it a Bedford-McMullen carpet.
Let $(p_{ij})_{(i,j)\in G}$ be a probability vector with $p_{ij}>0$
for all $(i,j)\in G$, there exists a unique Borel probability measure
$\mu$ satisfying
\begin{equation}
\mu=\sum_{(i,j)\in G}p_{ij}\mu\circ f_{ij}^{-1}.\label{selfaffinemeas}
\end{equation}
The measure $\mu$ is referred to as the self-affine measure associated with $(p_{ij})_{(i,j)\in G}$
and $(f_{ij})_{(i,j)\in G}$. Self-affine sets and measures in the above-mentioned cases and some more general cases have
been intensively studied in the past years; one may see \cite{Bed:84,Fal:10,King:95,LG:92,Mcmullen:84,Peres:94b}
for interesting results in this direction. Write
\begin{eqnarray*}
 &&G_{x}:=  \left\{ i:(i,j)\in G \mbox{ for some } j \right\};
\;\;G_{y}:=\left\{ j:(i,j)\in G \mbox{ for some } i \right\} ,\\
 &&G_{x,j}:=\left\{ i:(i,j)\in G\right\},\;\;q_{j}:=\sum_{i\in G_{x,j}}p_{ij}\;,j\in G_y;\;\;\theta:=\frac{\log m}{\log n}.
\end{eqnarray*}
We say that $E$ has uniform horizontal fibres if ${\rm card}(G_{x,j})$ is constant for $j\in G_y$.
By Peres \cite{Peres:94b}, the Hausdorff measure of $E$ is infinite in its Hausdorff dimension if $E$ does not have uniform horizontal fibres; otherwise its Hausdorff measure is finite and positive.

In the present paper, we further study the quantization problem for self-affine measures as defined in (\ref{selfaffinemeas}). We refer to \cite{KZ:15} for some previous work of the author and Kesseb\"{o}hmer.

The quantization problem for probability measures originated in
information theory and engineering technology (cf. \cite{GN:98,Za:63}).
Mathematically, the problem consists in estimating the asymptotic error in
the approximation of a given probability measure by discrete probability
measures with finite support in terms of $L_{r}$-metrics.
We refer to Graf and Luschgy \cite{GL:00} for rigorous mathematical foundations of
quantization theory. One may see \cite{GL:04,GL:05,Kr:08,LM:02,PK:01} for more related results.

Let $\|\cdot\|$ be a norm on $\mathbb{R}^{q}$ and $d$ the metric
induced by this norm. For each $k\in\mathbb{N}$, we write $\mathcal{D}_{k}:=\{\alpha\subset\mathbb{R}^{q}:1\leq{\rm card}(\alpha)\leq k\}$.
Let $\nu$ be a Borel probability measure on $\mathbb{R}^{q}$. The
$k$th quantization error for $\nu$ of order $r\in(0,\infty)$ is defined
by
\begin{eqnarray}\label{quanerror}
e_{k,r}(\nu):=\bigg(\inf_{\alpha\in\mathcal{D}_{k}}\int d(x,\alpha)^{r}d\nu(x)\bigg)^{\frac{1}{r}}
\end{eqnarray}
By \cite{GL:00}, the $k$th quantization error equals the error when approximating $\nu$ with discrete probability measures supported on at most $k$ points.

If the infimum in (\ref{quanerror}) is attained at some $\alpha\in\mathcal{D}_k$, then we call $\alpha$ an $k$-optimal set for $\nu$ of order $r$. The collection of all $k$-optimal sets for $\nu$ of order $r$ is denoted by $C_{k,r}(\nu)$. By Theorem 4.12 of \cite{GL:00}, $C_{k,r}(\nu)$ is non-empty provided that the moment condition $\int |x|^rd\nu(x)<\infty$ is satisfied. This condition is clearly ensured if the support of the measure $\nu$ is compact. Also, under the moment condition, we have $e_{k,r}(\nu)\to 0$ as $k$ tends to infinity (see Lemma 6.1 of \cite{GL:00}).

As natural characterizations of the asymptotics for the quantization error $e_{k,r}(\nu)$ as $k$ tends to infinity, we consider the $s$-dimensional upper and
lower quantization coefficient of order $r$, which are defined below:
\begin{eqnarray*}
\underline{Q}_{r}^{s}(\nu):=\liminf_{k\to\infty}k^{\frac{r}{s}}e_{k,r}^r(P),\;\;\overline{Q}_{r}^{s}(\nu):=\limsup_{k\to\infty}k^{\frac{r}{s}}e_{k,r}^r(\nu),\;\; s\in(0,\infty).
\end{eqnarray*}
The upper and lower quantization dimension for $\nu$ of order $r$ are defined by
\begin{eqnarray}
\overline{D}_{r}(\nu):=\limsup_{k\to\infty}\frac{\log k}{-\log e_{k,r}(\nu)},\;\underline{D}_{r}(\nu):=\liminf_{k\to\infty}\frac{\log k}{-\log e_{k,r}(\nu)}.\label{quandimdef}
\end{eqnarray}
These two quantities are respectively the critical points at which the upper and lower quantization coefficient jump from infinity to zero (cf. Proposition 11.3 of \cite{GL:00} or \cite{PK:01}).
If $\overline{D}_{r}(\nu)=\underline{D}_{r}(\nu)$, the common value
is called the quantization dimension for $\nu$ of order $r$
and denoted by $D_{r}(\nu)$.

Compared with the upper and lower quantization
dimension, the upper and lower quantization coefficient
provide us with more accurate information on the asymptotic properties
of the quantization error. Accordingly, it is usually much more difficult to examine the finiteness and positivity of the upper and lower quantization coefficient.

Next, we recall our previous work on the quantization for self-affine measures in \cite{KZ:15}.
Let $s_{r}$ be the unique solution of the following equation:
\begin{eqnarray}
\bigg(\sum_{(i,j)\in G}(p_{ij}m^{-r})^{\frac{s_{r}}{s_{r}+r}}\bigg)^{\theta}\bigg(\sum_{j\in G_{y}}(q_{j}m^{-r})^{\frac{s_{r}}{s_{r}+r}}\bigg)^{1-\theta}=1.\label{maineq1}
\end{eqnarray}
In \cite{KZ:15}, Kesseb\"{o}hmer and Zhu proved that, for every $r\in(0,\infty)$, the quantization dimension for $\mu$ of order $r$ exists and equals $s_r$. Moreover, the $s_r$-dimensional upper and lower quantization coefficient are both positive and finite if one of the following conditions is fulfilled:

\begin{enumerate}

\item[\rm (a)]$\sum_{i\in G_{x,j}}(p_{ij}q_{j}^{-1})^{\frac{s_{r}}{s_{r}+r}}$
is identical for all $j\in G_{y}$;

\item[\rm (b)]$q_{j}$ is identical for
all $j\in G_{y}$.
\end{enumerate}

While the quantization dimension is determined for $\mu$ in general, the finiteness and positivity of the upper and lower quantization coefficient are examined only for some rare cases (a) and (b); in these cases we could estimate the asymptotics of the quantization error by means of another self-affine measure. One may see \cite{KZ:15} for more details.

As the upper and lower quantization coefficient indicate the convergence order of the quantization errors, they are of significant importance in quantization theory for probability measures. In view of our previous work in \cite{KZ:15}, a natural question is, what will happen if we drop the conditions in (a) and (b). With Peres'results \cite{Peres:94b} in mind, one might compare the quantization coefficient for $\mu$ with the Hausdorff measure of $E$ and conjecture that the above assumption (a) or (b) is a necessary condition for the upper and lower quantization coefficient to be both positive and finite. However, as our main result of the present paper, we will prove
\begin{theorem}
\label{mthm1} Let $\mu$ be the self-affine measure as defined in (\ref{selfaffinemeas}). Then for every $r\in(0,\infty)$ we have
$0<\underline{Q}_{r}^{s_{r}}(\mu)\leq\overline{Q}_{r}^{s_{r}}(\mu)<\infty$.
\end{theorem}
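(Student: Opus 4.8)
The plan is the following. Write $A\asymp B$ for $c\,A\le B\le C\,A$ with constants depending only on $m,n,G,(p_{ij})$ and $r$, and let $\lesssim,\gtrsim$ be the one–sided versions; put $\tau:=s_r/(s_r+r)\in(0,1)$, so $1-\tau=r/(s_r+r)$ and $(1-\tau)/\tau=r/s_r$. If $\mathrm{card}(G_x)=1$ or $\mathrm{card}(G_y)=1$, then $E$ is a line segment and $\mu$ is conjugate to a self–similar measure on $\mathbb R$ obeying the open set condition, for which the statement is Graf--Luschgy \cite{GL:00}; so assume $\mathrm{card}(G_x),\mathrm{card}(G_y)\ge 2$. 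The basic tool is McMullen's decomposition of $E$ into approximate squares (\cite{Mcmullen:84,KZ:15}): for $k\in\mathbb N$ and $L:=\lceil k/\theta\rceil$, a stage-$k$ approximate square is $Q=\bigcup f_{i_1j_1}\circ\cdots\circ f_{i_Lj_L}(E)$, the union over admissible $i_{k+1},\dots,i_L$, indexed by $(i_1j_1)\cdots(i_kj_k)\in G^{k}$ and $(j_{k+1},\dots,j_L)\in G_y^{\,L-k}$. Its $x$-extent can be much smaller than $n^{-k}$, but its $y$-extent, hence $\mathrm{diam}(Q)$, is always $\asymp m^{-L}\asymp n^{-k}$ since $\pi_2(E)$ has positive diameter, while $\mu(Q)=\prod_{t\le k}p_{i_tj_t}\prod_{k<t\le L}q_{j_t}$. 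With $w(Q):=\mu(Q)\,\mathrm{diam}(Q)^{r}$, distributing $m^{-rL}$ over the $L$ coordinates and summing over the stage-$k$ squares yields
\[
\sum_{\mathrm{stage}(Q)=k}w(Q)^{\tau}\ \asymp\ \Big(\sum_{(i,j)\in G}(p_{ij}m^{-r})^{\tau}\Big)^{k}\Big(\sum_{j\in G_y}(q_jm^{-r})^{\tau}\Big)^{L-k}\ =\ 1 ,
\]
the last equality being (\ref{maineq1}), up to a bounded factor from the rounding in $L=\lceil k/\theta\rceil$. This stage-wise pressure identity is the engine of the proof.

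For $w_0>0$ let $\Gamma_{w_0}:=\{Q:w(Q)\le w_0<w(\widehat Q)\}$, where $\widehat Q$ is the parent of $Q$: a finite maximal antichain whose elements satisfy $w(Q)\asymp w_0$ but whose stages fill an interval of length $\asymp\log(1/w_0)$. The scheme proceeds from the sharp counting estimate
\begin{equation}
\mathrm{card}(\Gamma_{w_0})\ \asymp\ w_0^{-\tau},\qquad\text{equivalently}\qquad \sum_{Q\in\Gamma_{w_0}}w(Q)^{\tau}\ \asymp\ 1. \tag{$\ast$}
\end{equation}
\emph{Upper bound.} Pick $w_0=w_0(N)$ with $N/2\le\mathrm{card}(\Gamma_{w_0})\le N$, so $w_0\asymp N^{-1/\tau}$ by $(\ast)$, and quantize $\mu$ by the centres of the squares of $\Gamma_{w_0}$; using $w(Q)\le w_0^{\,1-\tau}w(Q)^{\tau}$,
\[
e_{N,r}^{r}(\mu)\ \le\ \sum_{Q\in\Gamma_{w_0}}\mu(Q)\,\mathrm{diam}(Q)^{r}\ \le\ w_0^{\,1-\tau}\sum_{Q\in\Gamma_{w_0}}w(Q)^{\tau}\ \asymp\ w_0^{\,1-\tau}\ \asymp\ N^{-r/s_r},
\]
whence $\overline Q^{s_r}_r(\mu)<\infty$.

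\emph{Lower bound.} First a uniform local non-degeneracy estimate: there are $n_0\in\mathbb N$, $c_1>0$ with $\int_Q d(x,\beta)^{r}\,d\mu\ge c_1\,w(Q)$ whenever $\mathrm{card}(\beta)\le n_0$. This follows by projecting to the second coordinate: $\mathrm{diam}(Q)$ is comparable to the $y$-extent of $Q$, that projection is $1$-Lipschitz, and the conditional $y$-distribution of $\mu|_Q$ rescaled to unit length is, by one-dimensional self-similarity, exactly the marginal $\nu=\sum_{j\in G_y}q_j\,\nu\circ g_j^{-1}$ (with $g_j(y)=m^{-1}(y+j)$), which is non-atomic; hence the constants are uniform in $Q$. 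Now fix an $N$-optimal set $\alpha_N$ and, via $(\ast)$, a level $w_0$ with $\mathrm{card}(\Gamma_{w_0})$ a large constant multiple of $N$. A recursive charging argument in the tree of approximate squares — at each node comparing $\int_{Q}d(x,\alpha_N)^{r}d\mu$ with $\sum_i\int_{Q_i}d(x,\alpha_N)^{r}d\mu$ over the children $Q_i$ of $Q$, charging to $Q_i$ the points of $\alpha_N$ lying in a fixed dilate $Q_i^{*}$ (of bounded overlap, the $Q_i$ being comparable in size), invoking the local estimate once a node is charged at most $n_0$ points, and using the stage-wise identity to keep the recursion tight so that no multiplicative loss accumulates over its $\asymp\log N$ levels — produces $e_{N,r}^{r}(\mu)\gtrsim\sum_{Q\in\Gamma_{w_0}}w(Q)\asymp N^{-r/s_r}$, whence $\underline Q^{s_r}_r(\mu)>0$.

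The crux — and the point at which one must go beyond \cite{KZ:15} — is $(\ast)$. The approximate-square pressure is \emph{not} quasi-multiplicative once the horizontal fibres are non-uniform: passing from $Q$ to its children multiplies $\sum w(\cdot)^{\tau}$ by a factor that depends on the slaved digit $j_{k+1}$ of $Q$, equal to $1$ for all $Q$ precisely under hypothesis (a) of \cite{KZ:15} and in general ranging over an interval $[\gamma_-,\gamma_+]$ with $\gamma_-<1<\gamma_+$, so a priori $\sum_{Q\in\Gamma}w(Q)^{\tau}$ can drift from $1$ as $\Gamma$ is chosen to descend ``in the wrong direction.'' I expect $(\ast)$ to hold nonetheless for the balanced antichains $\Gamma_{w_0}$, either from a large-deviation estimate — the number of stage-$k$ approximate squares of a prescribed weight decays geometrically in $|k-k^{*}|$ around the dominant stage $k^{*}$, so $\sum_{\Gamma_{w_0}}w(Q)^{\tau}$ is a Laplace-type sum comparable to its leading term $\asymp 1$ — or, equivalently, by passing to a gauge-corrected weight $\widehat w(Q):=w(Q)\,\kappa^{-\mathrm{stage}(Q)/\tau}\prod_{t=k+1}^{L}\phi(j_t)$, with $\phi(j)\propto q_j^{-1}\bigl(\sum_{i\in G_{x,j}}p_{ij}^{\tau}\bigr)^{1/\tau}$ a Perron--Frobenius eigenfunction for an associated transfer operator on $G_y$ and $\kappa=\sum_{G}(p_{ij}n^{-r})^{\tau}$, for which $\sum_{\text{children of }Q}\widehat w(\cdot)^{\tau}=\widehat w(Q)^{\tau}$ exactly, so that $\sum_{Q\in\Gamma}\widehat w(Q)^{\tau}$ is constant over all maximal antichains. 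Since $(\ast)$ and the gauge $\phi$ involve the fibre data only through $\sum_{G}(p_{ij}m^{-r})^{\tau}$ and $\sum_{G_y}(q_jm^{-r})^{\tau}$ — and not through how $q_j$ is split among the columns $G_{x,j}$, which is exactly the information whose non-uniformity forces the Hausdorff measure of $E$ in its dimension to be infinite (Peres \cite{Peres:94b}) — the final statement is insensitive to the horizontal fibres.
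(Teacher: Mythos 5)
Your outline reproduces the architecture of the actual proof: reduce everything to antichains of approximate squares of comparable weight $w(Q)=\mu(Q)\,\mathrm{diam}(Q)^r$, get the upper bound from $\sum_{\Gamma}w(Q)^{\tau}\lesssim 1$ (your $\tau=s_r/(s_r+r)$) plus H\"older, and the lower bound from a cardinality--separation--local-nondegeneracy argument. You have also correctly located the crux: without hypotheses (a) or (b), promoting the slaved digit $j$ multiplies $\sum w(\cdot)^{\tau}$ by $c(j)=q_j^{-\tau}\sum_{i\in G_{x,j}}p_{ij}^{\tau}$, so the approximate-square pressure is not conservative along the tree. But at exactly this point you stop proving and start conjecturing: $(\ast)$ is asserted with ``I expect $(\ast)$ to hold \dots either from a large-deviation estimate \dots or \dots by passing to a gauge-corrected weight,'' and neither route is carried out. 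Worse, the gauge route fails as described: $\widehat w(Q)$ differs from $w(Q)$ by the factor $\prod_{t}\phi(j_t)$ taken over the $\asymp(\theta^{-1}-1)k$ slaved digits, and $\phi$ is non-constant precisely when (a) fails, so this factor ranges over an exponentially wide interval in the stage; exact conservativity of $\widehat w$ therefore gives $\sum_{\Gamma_{w_0}}\widehat w^{\tau}=1$ but no two-sided control of $\sum_{\Gamma_{w_0}}w^{\tau}$. The lower half of $(\ast)$ is the hard half, and the paper in fact never proves it for $\Gamma_{w_0}$ (its $\Upsilon_{j,r}$).

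What the paper does instead is map each location code $\sigma=\sigma_a\ast\sigma_b$ to the cylinder $[\sigma_a]\times[\sigma_b]\subset G^{\mathbb N}\times G_y^{\mathbb N}$ and use the genuine product probability measure $W=\lambda\times\nu$ built from $\widetilde p_{ij}\propto(p_{ij}m^{-r})^{s_r/(s_r+r)}$ and $\widetilde q_j\propto(q_jm^{-r})^{s_r/(s_r+r)}$; the normalizer $P_r^{-\ell(k)}Q_r^{-(k-\ell(k))}$ is \emph{deterministic in the stage} and bounded because of (\ref{maineq1}), so $W([\sigma_a]\times[\sigma_b])\asymp\mathcal E_r(\sigma)$ uniformly --- this is the bounded substitute for your gauge. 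The price is that the correspondence destroys disjointness, and controlling that is the real work: for the upper half of $(\ast)$ one shows the image cylinders of an antichain can only nest to a bounded depth $H_{1,r}$ (Lemma \ref{pre0}), giving $\sum_{\Upsilon_{j,r}}\mathcal E_r(\sigma)\le H_{1,r}$; for the lower bound one abandons $\Upsilon_{j,r}$ altogether and manufactures a new antichain $\mathcal L_{j,r}(2)$ by concatenating maximal antichains of the product space onto stage-$k_{1j}$ codes and pruning nested approximate squares via Lemma \ref{pre2}, after which the separation lemma of \cite{KZ:15} (your ``recursive charging argument,'' which you likewise only sketch) applies. Supplying these two overlap-control arguments is the actual content of the theorem, and your proposal does not contain them.
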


By Theorem \ref{mthm1}, one can see that the $k$th quantization error for $\mu$ of order $r$ is of the same order as $k^{-\frac{1}{s_r}}$, independently of the horizontal fibres of $E$.

The main obstacle in the way of proving Theorem \ref{mthm1} lies in the fact that, without the assumptions (a) and (b), one can hardly transfer the sums over approximate squares (cf. Section 2) of different orders to those over approximate squares of the same order. Our main idea is to associate approximate squares with subsets of the product space $G^{\mathbb{N}}\times G_y^{\mathbb{N}}$ and vice versa. This will enable us to estimate the asymptotic quantization errors for $\mu$ by means of a natural product measure on $G^{\mathbb{N}}\times G_y^{\mathbb{N}}$. We will also need to take care of the overlapping cases which are induced by such procedures.

\section{Preliminaries}
In order to avoid degenerate cases, in the following, we always assume that
\begin{equation}
2\leq m<n,\;{\rm card}\left(G_{x}\right),{\rm card}\left(G_{y}\right)\geq2.\label{hypo1}
\end{equation}
Since norms on $\mathbb{R}^q$ are pairwise equivalent, we will always work with Euclidean metrics for convenience.
For $x\in\mathbb{R}$, let $[x]$ denote the largest integer not exceeding
$x$. For every $k\in\mathbb{N}$, we set $\ell(k):=[k\theta]$ and
\begin{equation}\label{s1}
\Omega_{k}:=\left\{ \begin{array}{ll}
G_{y}^k,&\mbox{ if }\;k<\theta^{-1}\\
\Omega_{k}:=G^{\ell(k)}\times G_{y}^{k-\ell(k)},&\mbox{ if }\;k\geq\theta^{-1}
\end{array}\right.,\;k\in\mathbb{N};\;\Omega^{*}:=\bigcup_{k\geq1}\Omega_{k}.
\end{equation}
For $\sigma=\big((i_{1},j_{1}),\ldots,(i_{\ell(k)},j_{\ell(k)}),j_{\ell(k)+1},\ldots,j_{k}\big)\in\Omega^{*}$, we define
\begin{eqnarray}\label{g8}
&&|\sigma|:=k,\;\;\mu_{\sigma}:=\mu\left(F_{\sigma}\right)=\prod_{h=1}^{\ell(k)}p_{i_{h}j_{h}}\prod_{h=\ell(k)+1}^{k}q_{j_{h}},\\
&&\sigma_{a}:=\left((i_{1},j_{1}),\ldots,(i_{\ell(k)},j_{\ell(k)})\right),\;\sigma_{b}:=\left(j_{\ell(k)+1},\ldots,j_{k}\right)\nonumber.
\end{eqnarray}
We also write $\sigma=\sigma_a\ast\sigma_b$.
For $\sigma,\tau\in\Omega^{*}$, we write $\sigma\prec\tau$ if $F_{\tau}\subset F_{\sigma}$;
and write $\sigma=\tau^{\flat}$ if $\sigma\prec\tau$ and $|\tau|=|\sigma|+1$. For a word
\begin{equation}\label{g9}
\sigma=\big((i_{1},j_{1}),\ldots,(i_{\ell(k)},j_{\ell(k)}),j_{\ell(k)+1},\ldots,j_{k}\big)\in\Omega^{*},
\end{equation}
$\sigma^{\flat}$ takes the following two possible forms:
\begin{eqnarray}\label{s14}
\left\{ \begin{array}{ll}
((i_{1},j_{1}),\ldots,(i_{\ell(k)},j_{\ell(k)}),j_{\ell(k)+1},\ldots,j_{k-1}),\;{\rm if}\;\ell(k)=\ell(k-1)\\
((i_{1},j_{1}),\ldots,(i_{\ell(k)-1},j_{\ell(k)-1}),j_{\ell(k)},\ldots,j_{k-1}),\;{\rm if}\;\ell(k)=\ell(k-1)+1
\end{array}\right..
\end{eqnarray}

We say that $\sigma,\tau\in\Omega^{*}$ are incomparable if neither
$\sigma\prec\tau$ nor $\tau\prec\sigma$. A finite set $\Gamma\subset\Omega^{*}$
is called a finite antichain if any two words $\sigma,\tau\in\Gamma$
are incomparable; a finite antichain $\Gamma$ is called maximal if
$E\subset\bigcup_{\sigma\in\Gamma}F_{\sigma}$.

To each word $\sigma$ of the form (\ref{g9}),
there correspond two numbers $p,q$:
\[
p:=\sum_{h=1}^{\ell(k)}i_{h}n^{\ell(k)-h},\;\;q:=\sum_{h=1}^{k}j_{h}m^{k-h};
\]
and a unique rectangle which is called an approximate square of order $k$:
\begin{equation}\label{g3}
F_{\sigma}:=\bigg[\frac{p}{n^{\ell(k)}},\frac{p+1}{n^{\ell(k)}}\bigg]\times\bigg[\frac{q}{m^{k}},\frac{q+1}{m^{k}}\bigg].
\end{equation}
We call $\sigma$ the location code for the approximate square $F_\sigma$.
\begin{remark}{\rm (see \cite{KZ:15})
We have the following facts about approximate squares.
\begin{enumerate}

\item[\rm (f1)] Let $|A|$ denote the diameter of a set $A\subset\mathbb{R}^{2}$. One can easily see
\[
m^{-|\sigma|}\leq|F_{\sigma}|\leq m^{-|\sigma|}\sqrt{n^{2}+1}.
\]
\item[\rm (f2)] For $\sigma,\tau\in\Omega^*$, by the definition, we have, either $F_\sigma,F_\tau$ are non-overlapping, or one is a subset of the other.
\item[\rm (f3)] For $\sigma\in\Omega^*$, let $\mu_\sigma$ be as defined in (\ref{g8}). Then by (\ref{s14}), we have
\begin{equation}\label{g5}
\frac{\mu_\sigma}{\mu_{\sigma^\flat}}\leq\max\bigg\{\max_{(i,j)\in G}\frac{p_{ij}}{q_j}\max_{\hat{j}\in G_y}q_{\hat{j}},\max_{\hat{j}\in G_y}q_{\hat{j}}\bigg\}=\max_{\hat{j}\in G_y}q_{\hat{j}}.
\end{equation}
\end{enumerate}
}\end{remark}

For $r>0$ and each $k\geq1$, we define
\begin{eqnarray}\label{newanti}
&\underline{\eta}_{r}:=\min\big\{p_{ij}q_{k}m^{-r}:(i,j)\in G,k\in G_{y}\big\};\nonumber\\
&\Upsilon_{k,r}:=\big\{\sigma\in\Omega^{*}:\mu_{\sigma^{\flat}}m^{-|\sigma^{\flat}|r}\geq \underline{\eta}_{r}^k>\mu_{\sigma}m^{-|\sigma|r}\big\},\;\psi_{k,r}:={\rm card}(\Upsilon_{k,r}).
\end{eqnarray}

For two number sequences $(a_k)_{k=1}^\infty$ and $(b_k)_{k=1}^\infty$, we write $a_k\asymp b_k$ if there exists a constant $C$ independent of $k$ such that $Cb_k\leq a_k\leq C^{-1}b_k$. As the proof of Lemma 4 in \cite{KZ:15} shows, we have
\begin{eqnarray}\label{characterization}
e_{\psi_{k,r},r}^r(\mu)\asymp\sum_{\sigma\in\Upsilon_{k,r}}\mu_\sigma m^{-|\sigma|r}.
\end{eqnarray}
\begin{remark}{\rm
Let us make some remarks about $\Upsilon_{j,r}$ and the mass distribution of $\mu$.
\item[\rm (f4)]The set $\Upsilon_{k,r}$ possesses some kind of uniformity, which allows us estimate the number of points in a $\psi_{j,r}$-optimal set $\alpha$ which are lying in disjoint neighborhoods of the approximate squares $F_\sigma,\sigma\in\Upsilon_{k,r}$. We may think of (\ref{characterization}) roughly as follows. For each $\sigma\in\Upsilon_{j,r}$, $F_\sigma$ "owns" one point $a_\sigma$ of a $\psi_{j,r}$-optimal set $\alpha$ and
\[
\int_{F_\sigma}d(x,\alpha)^rd\mu(x)\asymp\mu_\sigma m^{-|\sigma|r}.
\]
We refer to \cite{KZ:15b} for some more intuitive interpretations on such estimates.

\item[\rm (f5)] The structure of the set $\Upsilon_{k,r}$ is not clear enough for us to estimate the sum on the right side of (\ref{characterization}). Let $\sigma$ be given in (\ref{g9}). Assume that $\ell(k+1)=\ell(k)+1$. For $j\neq j_{\ell(k)+1},(i,j)\in G$ and $\hat{j}\in G_y$, we write
 \[
 \hat{\sigma}=\big((i_{1},j_{1}),\ldots,(i_{\ell(k)},j_{\ell(k)}),(i,j),j_{\ell(k)+2},\ldots,j_{k},\hat{j}\big).
 \]
One can see that $F_{ \hat{\sigma}}$ is not a subset of $F_\sigma$. Roughly speaking, approximate squares do not enjoy enough "freedom" as far as sub-approximate squares are concerned.

\item[\rm (f6)]For distinct words of the form (8), the measure $\mu$ are distributed in different manners among sub-approximate squares of them, since the vectors $(p_{ij})_{i\in G_{x,j}}$ are typically not identical for $j\in G_y$.
}\end{remark}

The facts as stated in (f5) and (f6) seem to prevent us from constructing a suitable auxiliary measure via approximate squares without the assumptions (a) and (b) (see Section 1). In order to show the finiteness of the upper quantization coefficient for $\mu$, we will "embed" the sets $\Upsilon_{j,r}$ into the product space $G^{\mathbb{N}}\times G_y^{\mathbb{N}}$, and then estimate the quantization errors by using a product measure $W$ on $G^{\mathbb{N}}\times G_y^{\mathbb{N}}$ and counting all possible overlapping cases. To establish a lower bound for the lower quantization coefficient for $\mu$, we will construct a new sequence of subsets $\mathcal{L}_{j,r}(2)$ of $\Omega^*$ such that, on one hand, they can play the same role as $\Upsilon_{j,r}$, and on the other hand, they enjoy enough "freedom" so that the corresponding integrals can be well estimated by means of the above-mentioned product measure $W$.

For convenience, in the remaining part of the paper, we write
\[
\mathcal{E}_r(\sigma):=(\mu_\sigma m^{-|\sigma|r})^{\frac{s_r}{s_r+r}},\;\sigma\in\Omega^*.
\]
Note that $\psi_{j,r}\asymp\psi_{j+1,r}$ by the proof of Lemma 1 in \cite{KZ:15}. To study the finiteness and positivity of the upper and lower quantization coefficient for $\mu$, we will show that it suffices to examine the asymptotics of the sequence $(e_{\psi_{j,r},r}(\mu))_{j=1}^\infty$. By H\"{o}lder's inequality with exponent less than one, the problem further reduces to the asymptotics of the following number sequence:
\[
\sum_{\sigma\in\Upsilon_{j,r}}\mathcal{E}_r(\sigma),\;j\geq 1.
\]
For the proof of the main theorem, we will need to go back and forth between words in $\Upsilon_{j,r}$ and subsets of $G^{\mathbb{N}}\times G_y^{\mathbb{N}}$.

\section{The finiteness of the upper quantization coefficient for $\mu$}
We denote by $\vartheta$ the empty word and define
\[
G^0=G_y^0:=\{\vartheta\};\;\;G^*:=\bigcup_{k=0}^\infty G^k,\;\;G_y^*:=\bigcup_{k=0}^\infty G_y^k.
\]
Let $\sigma=\left((i_{1},j_{1}),\ldots,(i_{k},j_{k})\right)\in G^{k}$. We define
\begin{eqnarray}\label{s2}
|\sigma|=k,\;\;\sigma|_h=((i_{1},j_{1}),\ldots,(i_{h},j_{h})),\;1\leq h\leq k;\;\;\sigma^-:=\sigma|_{k-1}.
\end{eqnarray}
For $\sigma,\omega\in G^*$ with $\sigma=\omega|_{|\sigma|}$, we write $\sigma\prec\omega$. We define $\sigma|_h$ similarly for $\sigma\in G^{\mathbb{N}}$ and $h\geq 1$.
If $\omega\in G^*$ and $\sigma\in G^{\mathbb{N}}$ satisfy $\omega=\sigma_{|\omega|}$, then we also write $\omega\prec\sigma$.
For $\sigma=((i_{1},j_{1}),\ldots,(i_{k},j_{k}))$ and
$\omega=((i_{k+1},j_{k+1}),\ldots,(i_{k+h},j_{k+h}))\in G$, we write
\[
\sigma\ast\omega:=((i_{1},j_{1}),\ldots,(i_{k},j_{k}),(i_{k+1},j_{k+1}),\ldots,(i_{k+h},j_{k+h})).
\]
For $\rho,\tau\in G_y^*$, we define $\rho^-,\rho\ast\tau$ and a partial order "$\prec$" in the same manner as we did for words in $G^*$.
For $r\in(0,\infty)$, we write
\[
P_r:=\sum_{(i,j)\in G}(p_{ij}m^{-r})^{\frac{s_r}{s_r+r}},\;Q_r:=\sum_{j\in G_y}(q_jm^{-r})^{\frac{s_r}{s_r+r}}.
\]
It is noted in the proof of Lemma 5 of \cite{KZ:15} that $P_r\geq 1\geq Q_r$.

Set $\overline{q}:=\max_{j\in G_y}q_j$ and $\overline{\eta}_r:=(\overline{q}m^{-r})^{\frac{s_r}{s_r+r}}$. We define
\begin{equation}\label{s13}
H_{1,r}:=\min\{h:\overline{\eta}_r^h<\underline{\eta}_r\}.
\end{equation}
For every $k\geq 1$ and $\sigma=((i_1,j_1),\ldots,(i_k,j_k))\in G^k$ and $\tau=(j_1,\ldots,j_k)\in G_y^k$, we write
\begin{eqnarray*}
&[\sigma]=[(i_1,j_1),\ldots,(i_k,j_k)]:=\{\omega\in G^{\mathbb{N}}:\omega|_k=\sigma\};\\
&[\tau]=[j_1,\ldots,j_k]:=\{\rho\in G_y^{\mathbb{N}}:\rho|_k=\tau\}.
\end{eqnarray*}
Now for every $\sigma\in\Upsilon_{j,r}$, we associate $F_\sigma$ to a subset of $G^{\mathbb{N}}\times G_y^{\mathbb{N}}$ in the following way:
\[
\sigma=\sigma_a\ast\sigma_b\in\Upsilon_{j,r}\mapsto [\sigma_a]\times [\sigma_b]\subset G^{\mathbb{N}}\times G_y^{\mathbb{N}}.
\]

For every $(i,j)\in G$ and $j\in G_y$, we define
\[
\widetilde{p}_{ij}:=P_r^{-1}(p_{ij}m^{-r})^{\frac{s_r}{s_r+r}},\;\;\widetilde{q}_j:=Q_r^{-1}(q_jm^{-r})^{\frac{s_r}{s_r+r}}.
\]
Let $G$ and $G_y$ be endowed with discrete topology and $G^{\mathbb{N}},G_y^{\mathbb{N}}$ be endowed with the corresponding product topology. We denote by $\mathcal{B}_1,\mathcal{B}_2$ the Borel sigma-algebra on $G^{\mathbb{N}},G_y^{\mathbb{N}}$.
By Kolmogrov consistency theorem, there exist a unique Borel probability measure $\lambda$ on $G^{\mathbb{N}}$ and a unique Borel probability measure $\nu$ on $G_y^{\mathbb{N}}$ such that
\begin{eqnarray*}
&\lambda([\sigma])=\prod_{h=1}^k\widetilde{p}_{i_hj_h},\;{\rm for\;every}\;\;\sigma=(i_1,j_1),\ldots,(i_k,j_k)\in G^k\;\;{\rm and}\;\;k\geq 1;
\\&\nu([\tau])=\prod_{h=1}^k\widetilde{q}_{i_hj_h},\;{\rm for\;every}\;\;\tau=(j_1,\ldots,j_k)\in G_y^k\;\;{\rm and}\;\;k\geq 1.
\end{eqnarray*}
Thus, we obtain a unique product measure $W$ on $G^{\mathbb{N}}\times G_y^{\mathbb{N}}$ such that
\begin{eqnarray*}
W(A\times B)=\lambda(A)\nu(B),\;\;A\in\mathcal{B}_1,\; B\in\mathcal{B}_2.
\end{eqnarray*}
We know that words in $\Upsilon_{j,r}$ are pairwise incomparable and $F_\sigma,\sigma\in\Upsilon_{j,r}$, are non-overlapping. However, it can happen that $[\sigma^{(1)}_a]\times[\sigma^{(1)}_b]$ and $[\sigma^{(2)}_a]\times[\sigma^{(2)}_b]$ are overlapping. We will use the following lemma to treat such overlapping cases.

\begin{lemma}\label{pre0}
For every $\sigma\in\Upsilon_{j,r}$, we write
\[
S_1(\sigma):=\{\tau\in\Upsilon_{j,r}:\sigma_a\prec\tau_a,\sigma_b\prec\tau_b\}.
\]
Then we have
\[
\sum_{\tau\in S_1(\sigma)}W([\tau_a]\times[\tau_b])\leq H_{1,r}W([\sigma_a]\times[\sigma_b]).
\]
\end{lemma}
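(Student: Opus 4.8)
The plan is to understand the combinatorial structure of $S_1(\sigma)$ and then bound the $W$-mass it carries. For $\tau \in S_1(\sigma)$ we have $\sigma_a \prec \tau_a$ and $\sigma_b \prec \tau_b$, so $[\tau_a] \times [\tau_b] \subset [\sigma_a] \times [\sigma_b]$; thus the terms on the left are all dominated by $W([\sigma_a]\times[\sigma_b])$, and the real content is that $S_1(\sigma)$ cannot be too ``thick''. First I would fix $\tau \in S_1(\sigma)$ and compare the orders $|\tau|$ and $|\sigma|$. Since $\tau \in \Upsilon_{j,r}$, we have $\mu_\tau m^{-|\tau|r} < \underline{\eta}_r^j \leq \mu_{\sigma^\flat} m^{-|\sigma^\flat|r}$, and also $\mu_\sigma m^{-|\sigma|r} < \underline{\eta}_r^j$. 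The point is that $\sigma_a \prec \tau_a$ and $\sigma_b \prec \tau_b$ force $F_\tau \subset F_\sigma$ in a very restricted way: every extra symbol in $\tau_a$ beyond $\sigma_a$ contributes a factor $p_{ij} m^{-r}$ (which is at most $\overline{q}\,m^{-r}$ after the usual splitting, using $p_{ij} \leq q_j \leq \overline{q}$), and every extra symbol in $\tau_b$ beyond $\sigma_b$ contributes a factor $q_{j'} m^{-r} \leq \overline{q}\,m^{-r}$. Hence
\[
\mu_\tau m^{-|\tau|r} \leq (\overline{q}\,m^{-r})^{|\tau|-|\sigma|}\,\mu_\sigma m^{-|\sigma|r}.
\]
Combining this with $\mu_{\sigma^\flat} m^{-|\sigma^\flat|r} \geq \underline{\eta}_r^j > \mu_\sigma m^{-|\sigma|r}$ and the bound $\mu_{\sigma^\flat}/\mu_\sigma$ type estimate implicit in (f3), one sees that $\mu_\tau m^{-|\tau|r}$ and $\mu_\sigma m^{-|\sigma|r}$ differ by at most a bounded power of $\overline{q}\,m^{-r}$; more precisely, $(\overline{q}\,m^{-r})^{|\tau|-|\sigma|}$ cannot be smaller than roughly $\underline{\eta}_r / \overline{q}$, which by the definition \eqref{s13} of $H_{1,r}$ means $|\tau| - |\sigma| \leq H_{1,r} - 1$, say. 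So $S_1(\sigma)$ only contains words of at most $H_{1,r}$ consecutive orders above $|\sigma|$.

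Next I would organize $S_1(\sigma)$ by order: write $S_1(\sigma) = \bigcup_{t=0}^{H_{1,r}-1} S_1^{(t)}(\sigma)$ where $S_1^{(t)}(\sigma) = \{\tau \in S_1(\sigma) : |\tau| = |\sigma| + t\}$. The key observation is that for each fixed $t$, the cylinder products $[\tau_a]\times[\tau_b]$ with $\tau \in S_1^{(t)}(\sigma)$ are pairwise disjoint: two distinct words $\tau, \tau'$ of the same order in $\Omega^*$ have non-overlapping $F_\tau, F_{\tau'}$, and since they have the same order, their $(\ell, k-\ell)$-splittings have the same lengths, so $[\tau_a]\times[\tau_b]$ and $[\tau'_a]\times[\tau'_b]$ are disjoint cylinder products (here one uses that distinct approximate squares of the same order come from distinct location codes, hence distinct $(a,b)$-splittings). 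All of these lie inside $[\sigma_a]\times[\sigma_b]$, because $\sigma_a \prec \tau_a$ and $\sigma_b \prec \tau_b$. Therefore, by additivity of $W$,
\[
\sum_{\tau \in S_1^{(t)}(\sigma)} W([\tau_a]\times[\tau_b]) \leq W([\sigma_a]\times[\sigma_b]),
\]
and summing over $t = 0, \ldots, H_{1,r}-1$ gives the stated bound $\sum_{\tau \in S_1(\sigma)} W([\tau_a]\times[\tau_b]) \leq H_{1,r}\, W([\sigma_a]\times[\sigma_b])$.

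The main obstacle I anticipate is the first step: pinning down precisely that $|\tau| - |\sigma|$ is bounded by (something like) $H_{1,r}$, because the relationship between $\mu_\tau m^{-|\tau|r}$ and the threshold $\underline{\eta}_r^j$ is two-sided and the bookkeeping of how the $a$- and $b$-parts of the word grow when passing from $\sigma$ to $\tau$ must be done carefully — in particular one must handle the two cases in \eqref{s14} for how $\ell(k)$ increments, and track that extending $\sigma_a$ or $\sigma_b$ always costs a factor bounded above by $\overline{q}\,m^{-r} = \overline{\eta}_r^{(s_r+r)/s_r}$ so that $H_{1,r}$ from \eqref{s13} is exactly the right cutoff. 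The disjointness-within-a-fixed-order claim is essentially the observation (f2) transported through the map $\sigma \mapsto [\sigma_a]\times[\sigma_b]$, but it needs the remark that words of equal length in $\Omega^*$ have $a$-parts and $b$-parts of equal respective lengths, so no spurious overlap of cylinders can occur among them; this should be routine once stated. Everything else is additivity of the product measure $W$.
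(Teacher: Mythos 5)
Your proposal is correct and follows essentially the same route as the paper: bound $|\tau|-|\sigma|$ by $H_{1,r}$ via the per-symbol factor $\overline{q}\,m^{-r}=\overline{\eta}_r^{(s_r+r)/s_r}$ together with the two-sided threshold condition defining $\Upsilon_{j,r}$, then group the cylinders $[\tau_a]\times[\tau_b]$ by level, use their pairwise disjointness within each level and their containment in $[\sigma_a]\times[\sigma_b]$, and sum over at most $H_{1,r}$ levels. The paper organizes the level-by-level step through explicit partitions $\Gamma_h([\sigma_a]\times[\sigma_b])$ of the big cylinder, but this is only a cosmetic difference from your disjointness argument.
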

\begin{proof}
For every $h\geq 1$, let $\Gamma_h([\sigma_a]\times[\sigma_b])$ denote the collection of the subsets $[\rho]\times [\omega]$ of $G^{\mathbb{N}}\times G_y^{\mathbb{N}}$ satisfying
\begin{eqnarray*}
[\rho]\times [\omega]\subset[\sigma_a]\times[\sigma_b],\;|\rho|+|\omega|=|\sigma|+h,\;[(|\rho|+\omega)\theta]=|\rho|.
\end{eqnarray*}
Note that the words in $\Gamma_1([\sigma_a]\times[\sigma_b])$ take exactly one of the following two forms:
\begin{equation}\label{z2}
[\sigma_a\ast(i,j)]\times[\sigma_b],\;\;{\rm or}\;\;[\sigma_a]\times[\sigma_b\ast \hat{j}],\;(i,j)\in G,\;\hat{j}\in G_y.
\end{equation}
Using this fact and mathematical induction, for every $h\geq 1$, we obtain
\begin{equation}\label{z10}
\sum_{\rho\times\omega\in\Gamma_h([\sigma_a]\times[\sigma_b])}W(\Gamma_h([\sigma_a]\times[\sigma_b]))=W([\sigma_a]\times[\sigma_b]).
\end{equation}
Also, using (\ref{z2}) and mathematical induction, for every $\rho\times\omega\in\Gamma_h([\sigma_a]\times[\sigma_b])$, we have
\begin{equation}\label{z3}
\underline{\eta}_r^{\frac{hs_r}{s_r+r}}\mathcal{E}_r(\sigma)\leq\mathcal{E}_r(\rho\ast\omega)\leq\overline{\eta}_r^{\frac{hs_r}{s_r+r}}\mathcal{E}_r(\sigma).
\end{equation}
By the definition, one can see that for every $\tau\in S_1(\sigma)$, we have
\begin{eqnarray*}
[\tau_a]\times[\tau_b]\in\Gamma_h([\sigma_a]\times[\sigma_b])\;\;{\rm  for\; some}\;\; h.
\end{eqnarray*}
Suppose that for some $\tau\in S_1(\sigma)$, we have $|\tau|\geq |\sigma|+H_{1,r}$. By (\ref{z3}), we would have
\[
\mathcal{E}_r(\tau)\leq \overline{\eta}_r^{H_{1,r}}\mathcal{E}_r(\sigma)<\underline{\eta}_r^{\frac{s_r}{s_r+r}}\mathcal{E}_r(\sigma).
\]
This contradicts (\ref{newanti}), since by (\ref{newanti}), for every $\tau\in\Upsilon_{j,r}$, we have
\[
\underline{\eta}_r^{\frac{s_r}{s_r+r}}\mathcal{E}_r(\sigma)\leq\mathcal{E}_r(\tau)\leq\underline{\eta}_r^{\frac{-s_r}{s_r+r}}\mathcal{E}_r(\sigma).
\]
Thus, for every $\tau\in S_1(\sigma)$, we have $|\tau|\leq|\sigma|+H_{1,r}$. It follows that
\begin{equation}\label{g1}
\bigcup_{\tau\in S_1(\sigma)}[\tau_a]\times[\tau_b]\subset\bigcup_{h=1}^{H_{1,r}}\Gamma_h([\sigma_a]\times[\sigma_b]),
\end{equation}
For distinct words $\sigma^{(1)},\sigma^{(2)}\in\Upsilon_{j,r}$, we have either $\sigma^{(1)}_a\neq\sigma^{(2)}_a$, or $\sigma^{(1)}_b\neq\sigma^{(2)}_b$. So,
\begin{equation}\label{g2}
[\sigma^{(1)}_a]\times[\sigma^{(1)}_b]\neq[\sigma^{(2)}_a]\times[\sigma^{(2)}_b].
\end{equation}
Thus, the lemma follows by (\ref{z10}), (\ref{g1}) and (\ref{g2}).
\end{proof}

Next, we show the finiteness of the upper quantization coefficient for $\mu$, by using Lemma \ref{pre0} and the auxiliary measure $W$.
\begin{proposition}\label{pre1}
Let $\mu$ be a measure as defined in (\ref{selfaffinemeas}). Then $\overline{Q}_r^{s_r}(\mu)<\infty$.
\end{proposition}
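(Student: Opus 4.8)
The plan is to establish the finiteness of $\overline{Q}_r^{s_r}(\mu)$ by reducing the question, via the characterization \eqref{characterization}, to a uniform bound on the sums $\sum_{\sigma\in\Upsilon_{j,r}}\mathcal{E}_r(\sigma)$, and then to control these sums through the product measure $W$ on $G^{\mathbb{N}}\times G_y^{\mathbb{N}}$ together with the overlap bound in Lemma \ref{pre0}. First I would recall, from the discussion after \eqref{characterization} and the proof of Lemma 4 in \cite{KZ:15}, that it suffices to show $\limsup_{j\to\infty}\psi_{j,r}^{r/s_r}\,e_{\psi_{j,r},r}^r(\mu)<\infty$ (using $\psi_{j,r}\asymp\psi_{j+1,r}$ to pass from the subsequence $(\psi_{j,r})_j$ to all of $\mathbb{N}$), and that by H\"older's inequality with exponent $\frac{s_r}{s_r+r}<1$ this in turn follows once we prove
\[
\sum_{\sigma\in\Upsilon_{j,r}}\mathcal{E}_r(\sigma)\leq C\,\underline{\eta}_r^{\,\frac{js_r}{s_r+r}}\,\psi_{j,r}^{\,\frac{-r}{s_r+r}}\cdot\psi_{j,r}
\]
for a constant $C$ independent of $j$; more precisely, the combination of the definition of $\Upsilon_{j,r}$ (which forces $\mathcal{E}_r(\sigma)\asymp\underline{\eta}_r^{js_r/(s_r+r)}$) with the cardinality estimate reduces everything to bounding $\psi_{j,r}\,\underline{\eta}_r^{js_r/(s_r+r)}$ from above, equivalently to bounding $\sum_{\sigma\in\Upsilon_{j,r}}\mathcal{E}_r(\sigma)$ from above by a constant uniformly in $j$.

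The heart of the argument is the last reduction. I would observe that, by the defining inequality $\mu_\sigma m^{-|\sigma|r}<\underline{\eta}_r^j$ for $\sigma\in\Upsilon_{j,r}$ and the self-affine product form of $\mu_\sigma$, one has $\mathcal{E}_r(\sigma)=(\mu_\sigma m^{-|\sigma|r})^{s_r/(s_r+r)}$ and, crucially, $\mathcal{E}_r(\sigma)$ is \emph{comparable} to $W([\sigma_a]\times[\sigma_b])$ up to the normalizing factors $P_r^{|\sigma_a|}Q_r^{|\sigma_b|}$: indeed $W([\sigma_a]\times[\sigma_b]) = P_r^{-|\sigma_a|}Q_r^{-|\sigma_b|}\prod_{h=1}^{\ell}(p_{i_hj_h}m^{-r})^{s_r/(s_r+r)}\prod_{h=\ell+1}^{|\sigma|}(q_{j_h}m^{-r})^{s_r/(s_r+r)}$, so that $\mathcal{E}_r(\sigma)=P_r^{|\sigma_a|}Q_r^{|\sigma_b|}\,W([\sigma_a]\times[\sigma_b])$. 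Since $P_r\geq 1\geq Q_r$ and since $|\sigma_a|=\ell(|\sigma|)=[|\sigma|\theta]$, $|\sigma_b|=|\sigma|-[|\sigma|\theta]$, the key equation \eqref{maineq1} rewritten as $P_r^\theta Q_r^{1-\theta}=1$ shows that the product $P_r^{|\sigma_a|}Q_r^{|\sigma_b|}$ is bounded above and below by constants depending only on how far $[|\sigma|\theta]$ deviates from $|\sigma|\theta$ — hence by absolute constants $c_1,c_2$ independent of $\sigma$. Therefore $\mathcal{E}_r(\sigma)\asymp W([\sigma_a]\times[\sigma_b])$, and it remains to bound $\sum_{\sigma\in\Upsilon_{j,r}}W([\sigma_a]\times[\sigma_b])$.

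Here the overlapping issue enters: the sets $[\sigma_a]\times[\sigma_b]$ for $\sigma\in\Upsilon_{j,r}$ are generally not disjoint, so we cannot just say the sum is $\leq W(G^{\mathbb{N}}\times G_y^{\mathbb{N}})=1$. Instead I would first pass to the subfamily $\Upsilon_{j,r}^{\min}$ of $\sigma\in\Upsilon_{j,r}$ that are minimal in $\Upsilon_{j,r}$ for the relation "$\sigma_a\prec\tau_a$ and $\sigma_b\prec\tau_b$"; every $\sigma\in\Upsilon_{j,r}$ satisfies $\sigma\in S_1(\sigma')$ for exactly such a minimal $\sigma'$ (uniqueness because the $[\sigma_a]\times[\sigma_b]$ for different members of $\Upsilon_{j,r}$ are distinct and the relation is a forest on $\Upsilon_{j,r}$). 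Thus $\sum_{\sigma\in\Upsilon_{j,r}}W([\sigma_a]\times[\sigma_b]) \leq \sum_{\sigma'\in\Upsilon_{j,r}^{\min}}\sum_{\tau\in S_1(\sigma')}W([\tau_a]\times[\tau_b]) \leq H_{1,r}\sum_{\sigma'\in\Upsilon_{j,r}^{\min}}W([\sigma'_a]\times[\sigma'_b])$ by Lemma \ref{pre0}, and the sets $[\sigma'_a]\times[\sigma'_b]$ for $\sigma'\in\Upsilon_{j,r}^{\min}$ are pairwise disjoint, so this last sum is $\leq W(G^{\mathbb{N}}\times G_y^{\mathbb{N}})=1$. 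Combining, $\sum_{\sigma\in\Upsilon_{j,r}}\mathcal{E}_r(\sigma)\leq c_2^{-1}H_{1,r}$ for all $j$, which yields the desired uniform bound and hence $\overline{Q}_r^{s_r}(\mu)<\infty$. The main obstacle I anticipate is the verification that the minimality relation on $\Upsilon_{j,r}$ really does organize the family into a forest so that each element is captured by a \emph{unique} minimal ancestor with no double counting — this requires checking, using (f5) and the precise structure of $\sigma^\flat$ in \eqref{s14}, that if $[\sigma_a]\times[\sigma_b]\subsetneq[\tau_a]\times[\tau_b]$ with both in $\Upsilon_{j,r}$ then in fact $\tau_a\prec\sigma_a$ and $\tau_b\prec\sigma_b$ simultaneously, and that disjointness of the minimal sets is exact; the constants $c_1,c_2$ bounding $P_r^{[k\theta]}Q_r^{k-[k\theta]}$ should be routine once \eqref{maineq1} is invoked.
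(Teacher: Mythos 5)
Your proposal is correct and follows essentially the same route as the paper: the same comparison $\mathcal{E}_r(\sigma)\asymp W([\sigma_a]\times[\sigma_b])$ coming from $P_r^{\theta}Q_r^{1-\theta}=1$ (the paper's (\ref{s3})), the same grouping of overlapping cylinders under minimal (shortest) representatives controlled by Lemma \ref{pre0} (the paper's $\mathcal{F}(\sigma)$ and $\mathcal{F}_{j,r}$), and the same final passage from the uniform bound on $\sum_{\sigma\in\Upsilon_{j,r}}\mathcal{E}_r(\sigma)$ to $e_{\psi_{j,r},r}^r(\mu)\lesssim\psi_{j,r}^{-r/s_r}$ via $\psi_{j,r}\asymp\psi_{j+1,r}$. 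The ``forest'' concern you flag at the end is resolved exactly as you suggest (nested-or-disjoint cylinders plus the constraint $[(|\sigma_a|+|\sigma_b|)\theta]=|\sigma_a|$ force simultaneous comparability in both coordinates), which is the content of the paper's Case (ii).
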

\begin{proof}
First, we estimate $\sum_{\sigma\in\Upsilon_{j,r}}\mathcal{E}_r(\sigma)$ from above by means of the measure $W$. For a word $\sigma\in\Upsilon_{j,r}$, by the definition, it takes the form:
\[
\sigma=\big((i_{1},j_{1}),\ldots,(i_{\ell(k)},j_{\ell(k)}),j_{\ell(k)+1},\ldots,j_{k}\big)\in\Omega^{*}.
\]
We associate $\sigma$ with the following subset of $G^{\mathbb{N}}\times G_y^{\mathbb{N}}$:
\[
[\sigma_a]\times[\sigma_b]=[(i_1,j_1),\ldots,(i_{\ell(k)},j_{\ell(k)})]\times[j_{\ell(k)+1},\ldots,j_{k}].
\]
Note that for all $k\geq\theta^{-1}$, we have $P_r^{-1}Q_r\leq P_r^{\ell(k)}Q_r^{(k-\ell(k))}\leq 1$. We deduce
\begin{eqnarray}\label{s3}
W([\sigma_a]\times[\sigma_b])&&=\prod_{h=1}^{\ell(k)}\widetilde{p}_{i_hj_h}\prod_{h=\ell(k)+1}^k\widetilde{q}_{j_h}\nonumber
\\&&=P_r^{-\ell(k)}Q_r^{-(k-\ell(k))}
(\mu_\sigma m^{-|\sigma|r})^{\frac{s_r}{s_r+r}}\nonumber\\&&\left\{ \begin{array}{ll}
\leq P_r Q_r^{-1}\mathcal{E}_r(\sigma)\\
\geq\mathcal{E}_r(\sigma)
\end{array}\right..
\end{eqnarray}
For distinct words $\sigma^{(1)},\sigma^{(2)}\in\Upsilon_{j,r}$, we have either $\sigma^{(1)}_a\neq\sigma^{(2)}_a$ or $\sigma^{(1)}_b\neq\sigma^{(2)}_b$. Thus, they are associated to distinct subsets of $G^{\mathbb{N}}\times G_y^{\mathbb{N}}$. We write
\[
\mathcal{W}_{j,r}:=\{[\sigma_a]\times[\sigma_b]:\;\sigma=\sigma_a\ast\sigma_b\in\Upsilon_{j,r}\}.
\]
 We distinguish two cases:

Case (i): either $\sigma^{(1)}_a,\sigma^{(2)}_a$ or, $\sigma^{(1)}_b,\sigma^{(2)}_b$ are incomparable. In this case, we have
\[
([\sigma^{(1)}_a]\times[\sigma^{(1)}_b])\cap([\sigma^{(2)}_a]\times[\sigma^{(2)}_b])=\emptyset.
\]

Case (ii): both $\sigma^{(1)}_a,\sigma^{(2)}_a$  and $\sigma^{(1)}_b,\sigma^{(2)}_b$ are comparable. Note that
\[
[(|\sigma_a|+|\sigma_b|)\theta]=|\sigma_a|,\;{\rm for\;all}\;\;\sigma=\sigma_a\ast\sigma_b\in\Upsilon_{j,r}.
\]
Thus, whenever $|\sigma^{(1)}_a|<|\sigma^{(2)}_a|$, we have $|\sigma^{(1)}_b|\leq|\sigma^{(2)}_b|$. Hence, we may assume that
\[
\sigma^{(1)}_a\prec\sigma^{(2)}_a\;\;{\rm and}\;\;\sigma^{(1)}_b\prec\sigma^{(2)}_b.
\]
In this case we have
\[
([\sigma^{(1)}_a]\times[\sigma^{(1)}_b])\supset([\sigma^{(2)}_a]\times[\sigma^{(2)}_b]).
\]
Let $H_{1,r}$ be as defined in (\ref{s13}). Then by the proof of Lemma \ref{pre0}, we have $|\sigma^{(2)}|\leq|\sigma^{(1)}|+H_{1,r}$. For every $\sigma\in\Upsilon_{j,r}$, we write
\[
\mathcal{F}(\sigma):=\{\omega\in\Upsilon_{j,r}:\sigma_a,\omega_a; \;{\rm and}\; \sigma_b,\omega_b\;{\rm are\;both\;comparable}\}.
\]
Let $\widetilde{\sigma}$ denote the shortest word in $\mathcal{F}(\sigma)$ and $\mathcal{F}_{j,r}$ the set of all such words. Then, For every pair $\widetilde{\sigma},\widetilde{\omega}\in\mathcal{F}_{j,r}$, we have
\begin{equation}\label{s5}
([\widetilde{\sigma}_a]\times[\widetilde{\sigma}_b])\cap([\widetilde{\omega}_a]\times[\widetilde{\omega}_b])=\emptyset.
\end{equation}
By Lemma \ref{pre0}, we have
\begin{equation}\label{s4}
\sum_{\omega\in\mathcal{F}(\sigma)}W([\omega_a]\times[\omega_b])=\sum_{\omega\in S(\widetilde{\sigma})}W([\omega_a]\times[\omega_b])\leq H_{1,r} W([\widetilde{\sigma}_a]\times[\widetilde{\sigma}_b])
\end{equation}
Combining this with (\ref{s3})-(\ref{s5}), we deduce
\begin{eqnarray}\label{g4}
\sum_{\sigma\in\Upsilon_{j,r}}\mathcal{E}_r(\sigma)&\leq&\sum_{\sigma\in\Upsilon_{j,r}}W([\sigma_a]\times[\sigma_b])
\nonumber\\&=&\sum_{\widetilde{\sigma}\in\mathcal{F}_{j,r}}\sum_{\sigma\in\mathcal{F}(\widetilde{\sigma})}W([\sigma_a]\times[\sigma_b])\nonumber\\&\leq&
 H_{1,r}\sum_{\widetilde{\sigma}\in\mathcal{F}_{j,r}}W([\widetilde{\sigma}_a]\times[\widetilde{\sigma}_b])\nonumber\\&\leq& H_{1,r}.
\end{eqnarray}
This, together with (\ref{newanti}), implies
\begin{equation}
\psi_{j,r}\underline{\eta}_r^{\frac{(j+1)s_r}{s_r+r}}\leq H_{1,r},\;\;{\rm implying}\;\;\underline{\eta}_r^{\frac{jr}{s_r+r}}\leq H_{1,r}^{\frac{r}{s_r}}\underline{\eta}_r^{\frac{-r}{s_r+r}}\psi_{j,r}^{-\frac{r}{s_r}}.
\end{equation}
Using this, (\ref{characterization}) and (\ref{g4}), we have
\begin{eqnarray}\label{z1}
e_{\psi_{j,r},r}^r(\mu)&\asymp&\sum_{\sigma\in\Upsilon_{j,r}}\mu_\sigma m^{-|\sigma|r}=\sum_{\sigma\in\Upsilon_{j,r}}\mathcal{E}_r(\sigma)(\mu_\sigma m^{-|\sigma|r})^{\frac{r}{s_r+r}}\nonumber\\&\leq&\sum_{\sigma\in\Upsilon_{j,r}}\mathcal{E}_r(\sigma)\underline{\eta}_r^{\frac{jr}{s_r+r}}\leq H_{1,r}^{1+\frac{r}{s_r}}\underline{\eta}_r^{\frac{-r}{s_r+r}}\psi_{j,r}^{-\frac{r}{s_r}}.
\end{eqnarray}
By Lemma 1 in \cite{KZ:15}, we have $\psi_{j,r}\leq\psi_{j+1,r}\leq (mn)^{H_{1,r}}\psi_{j,r}$. For each $k\geq \psi_{1,r}$, there exists some $j$ such that $\psi_{j,r}\leq k<\psi_{j+1,r}$. Thus, by (\ref{z1}) and  Theorem 4.12 of \cite{GL:00}, we deduce
\begin{eqnarray}\label{g7}
\overline{Q}_r^{s_r}(\mu)&=&\limsup_{k\to\infty}k^{\frac{r}{s_r}}e_{k,r}^r(\mu)\leq\limsup_{j\to\infty}\psi_{j+1,r}^{\frac{r}{s_r}}
e_{\psi_{j,r},r}^r(\mu)\nonumber\\&\leq& (mn)^{\frac{rH_{1,r}}{s_r}}\limsup_{j\to\infty}\psi_{j,r}^{\frac{r}{s_r}}
e_{\psi_{j,r},r}^r(\mu)\nonumber\\&\leq&(mn)^{\frac{rH_{1,r}}{s_r}}H_{1,r}^{1+\frac{r}{s_r}}\underline{\eta}_r^{\frac{-r}{s_r+r}}.
\end{eqnarray}
The proof of the proposition is now complete.
\end{proof}

\section{The positivity of the lower quantization coefficient for $\mu$}

Let $\Upsilon_{j,r}$ be as defined in (\ref{newanti}). We write
\begin{eqnarray*}
k_{1j}:=\min_{\sigma\in\Upsilon_{j,r}}|\sigma|,\;k_{2j}:=\max_{\sigma\in\Upsilon_{j,r}}|\sigma|;\;\Lambda_{j,r}(k):=\Upsilon_{j,r}\cap\Omega_k.
\end{eqnarray*}
For $\sigma\in G^*$ and $\omega\in G_y^*$, we write $\sigma\times\omega$ for the corresponding word in $G^*\times G_y^*$.
We consider words of $G^*\times G_y^*$ which takes the following form:
\[
\sigma\times\omega,\;|\sigma|+\ell(k_{1j})=\ell(|\sigma|+|\omega|+k_{1j}),\;\sigma\in G^*,\omega\in G_y^*.
\]
Let $\mathcal{H}_{j,r}$ denote the set of all such words. Note that
\begin{eqnarray*}
\ell(|\sigma|+|\omega|+k_{1j}-1)&=&[(|\sigma|+|\omega|+k_{1j}-1)\theta]\geq(|\sigma|+|\omega|+k_{1j}-1)\theta-1\\
&=&(|\sigma|+|\omega|+k_{1j})\theta-(1+\theta)>|\sigma|+\ell(k_{1j})-2.
\end{eqnarray*}
Thus, $\ell(|\sigma|+|\omega|+k_{1j}-1)$ takes two possible values: $|\sigma|+\ell(k_{1j})$, or, $|\sigma|+\ell(k_{1j})-1$.
This allows us to define  $(\sigma\times\omega)^\flat\in\mathcal{H}_{j,r}$:
\begin{eqnarray}\label{s7}
(\sigma\times\omega)^\flat:=\left\{ \begin{array}{ll}
\sigma\times\omega^-\;\;{\rm if}\;\;\ell(|\sigma|+|\omega|+k_{1j}-1)=|\sigma|+\ell(k_{1j})\\
\sigma^-\times\omega\;\;{\rm if}\;\;\ell(|\sigma|+|\omega|+k_{1j}-1)=|\sigma|+\ell(k_{1j})-1
\end{array}\right.,
\end{eqnarray}
where $\sigma^-,\omega^-$ are as defined in section 3.
We write $P(\sigma\times\omega):=[\sigma]\times[\omega]$ and
$P((\sigma\times\omega)^\flat):=[\sigma]\times[\omega^-]\;{\rm or}\;[\sigma^-]\times[\omega]$ in accordance with (\ref{s7}). One can easily see
\begin{eqnarray}\label{s8}
P_r^{-1}\underline{\eta}_r^{\frac{s_r}{s_r+r}} W(P(\sigma\times\omega)^\flat)\leq W(P(\sigma\times\omega))<W(P(\sigma\times\omega)^\flat).
\end{eqnarray}

By the definition, for two words $\sigma^{(i)}\times\omega^{(i)}\in\mathcal{H}_{j,r},i=1,2$, if $|\sigma^{(1)}|<|\sigma^{(2)}|$, we have $|\omega^{(1)}|\leq|\omega^{(2)}|$. Thus, whenever $\sigma^{(1)}\prec\sigma^{(2)}$ and $\sigma^{(1)}\neq\sigma^{(2)}$, we have $\omega^{(1)}\prec\omega^{(2)}$.

We write $\sigma^{(1)}\times\omega^{(1)}\prec\sigma^{(2)}\times\omega^{(2)}$, if $\sigma^{(1)}\prec\sigma^{(2)}$ and $\omega^{(1)}\prec\omega^{(2)}$; if neither $\sigma^{(1)}\times\omega^{(1)}\prec\sigma^{(2)}\times\omega^{(2)}$, nor $\sigma^{(1)}\times\omega^{(1)}\prec\sigma^{(2)}\times\omega^{(2)}$, then we say that $\sigma^{(i)}\times\omega^{(i)}\in\mathcal{H}_{j,r},i=1,2$ are incomparable. A finite set $\Gamma\subset\mathcal{H}_{j,r}$ is called a finite maximal antichain, if the words in $\Gamma$ are pairwise incomparable, and for every word $\sigma\times\omega$ in $G^{\mathbb{N}}\times G_y^{\mathbb{N}}$, there exists some word $\sigma'\times\omega'$ such that $\sigma'\prec\sigma$ and $\omega'\prec\omega$; for such a $\Gamma$ in $\mathcal{H}_{j,r}$, we have
\begin{eqnarray}\label{s15}
\bigcup_{\sigma\times\omega\in\Gamma}[\sigma]\times[\omega]=G^{\mathbb{N}}\times G_y^{\mathbb{N}};
\end{eqnarray}
and for every pair of distinct words $\sigma^{(1)}\times\omega^{(1)},\sigma^{(2)}\times\omega^{(2)}\in\Gamma$, we have
\[
([\sigma^{(1)}]\times[\omega^{(1)}])\cap([\sigma^{(2)}]\times[\omega^{(2)}])=\emptyset.
\]

In order to establish a lower bound for the lower quantization coefficient for $\mu$, we will construct a family of subsets of $G^{\mathbb{N}}\times G_y^{\mathbb{N}}$ and associate them with approximate squares. The following lemma will be used to estimate the possible overlapping cases in this process. Recall that for $\sigma,\omega\in\Omega^*$, $\sigma\prec\omega$ means $F_\omega\subset F_\sigma$.
\begin{lemma}\label{pre2}
Let $\sigma\in\Omega^*$ and $H_{2,r}:=P_r^3Q_r^{-2}\underline{\eta}_r^{-\frac{s_r}{s_r+r}}$. We write
\begin{equation}\label{s6}
S_2(\sigma):=\{\omega\in\Omega^*:\;\sigma\prec\omega,\mathcal{E}_r(\omega)\geq H_{2,r}^{-1}\mathcal{E}_r(\sigma)\}.
\end{equation}
Then there exists a constant $H_{3,r}$, which is independent of $\sigma$, such that
\[
\sum_{\omega\in S_2(\sigma)}\mathcal{E}_r(\omega)\leq H_{3,r}\mathcal{E}_r(\sigma).
\]
\end{lemma}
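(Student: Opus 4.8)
The plan is to exploit two uniform structural features of the collection $\Omega^*$ of approximate-square codes: every $\sigma\in\Omega^*$ has only boundedly many ``children'' $\omega$ (those with $\omega^\flat=\sigma$), with a bound $N_0$ independent of $\sigma$; and passing from a word to one of its children multiplies $\mathcal E_r$ by at most $\overline\eta_r<1$. Together these force $S_2(\sigma)$ to reach only boundedly many levels below $\sigma$, while the total $\mathcal E_r$-mass present at each such level stays comparable to $\mathcal E_r(\sigma)$.

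For the branching bound I would read the description (\ref{s14}) of $\sigma^\flat$ in reverse. Since $0\le\ell(k)-\ell(k-1)\le 1$ for every $k$ (because $\theta<1$), a word $\omega$ with $\omega^\flat=\sigma$ arises from $\sigma$ in exactly one of two ways: either by appending one symbol $\hat{j}\in G_y$ to $\sigma_b$ (when $\ell(|\omega|)=\ell(|\sigma|)$), or by promoting the first symbol $j^\ast$ of $\sigma_b$ to a pair $(i,j^\ast)$ with $i\in G_{x,j^\ast}$ and appending one symbol $\hat{j}\in G_y$ to the rest (when $\ell(|\omega|)=\ell(|\sigma|)+1$; note $\sigma_b\neq\emptyset$ always, since $\ell(k)<k$). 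In either case the number of children is at most $N_0:=mn$. Since $\mu_\omega\le\mu_\sigma$ and $|\omega|>|\sigma|$ give $\mathcal E_r(\omega)\le\mathcal E_r(\sigma)$ for every child $\omega$, a routine induction on $h$ yields
\[
\sum_{\substack{\omega\in\Omega^*,\ \sigma\prec\omega\\ |\omega|=|\sigma|+h}}\mathcal E_r(\omega)\ \le\ N_0^{\,h}\,\mathcal E_r(\sigma),\qquad h\ge 0 .
\]

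For the depth bound I would use (\ref{g5}). If $\sigma\prec\omega$ with $|\omega|-|\sigma|=h$, pick the chain $\sigma=\omega_0\prec\omega_1\prec\cdots\prec\omega_h=\omega$ through successive $\flat$-parents; by (\ref{g5}) each ratio $\mu_{\omega_i}/\mu_{\omega_{i-1}}$ is at most $\overline q$, so
\[
\mathcal E_r(\omega)=\Bigl(\frac{\mu_\omega}{\mu_\sigma}\,m^{-hr}\Bigr)^{\frac{s_r}{s_r+r}}\mathcal E_r(\sigma)\ \le\ \overline\eta_r^{\,h}\,\mathcal E_r(\sigma).
\]
Since $\overline\eta_r<1$ and $H_{2,r}=P_r^3Q_r^{-2}\underline\eta_r^{-\frac{s_r}{s_r+r}}>1$ (because $P_r\ge1\ge Q_r$ and $\underline\eta_r<1$), membership in $S_2(\sigma)$, i.e.\ the requirement $\mathcal E_r(\omega)\ge H_{2,r}^{-1}\mathcal E_r(\sigma)$ in (\ref{s6}), forces $h\le H$, where $H:=\bigl\lceil\log H_{2,r}/\log\overline\eta_r^{-1}\bigr\rceil$ depends only on $r$ and the defining data. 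Combining the two displays,
\[
\sum_{\omega\in S_2(\sigma)}\mathcal E_r(\omega)\ \le\ \sum_{h=0}^{H}\ \sum_{\substack{\omega\in\Omega^*,\ \sigma\prec\omega\\ |\omega|=|\sigma|+h}}\mathcal E_r(\omega)\ \le\ \Bigl(\sum_{h=0}^{H}N_0^{\,h}\Bigr)\mathcal E_r(\sigma),
\]
so the lemma holds with $H_{3,r}:=\sum_{h=0}^{H}N_0^{\,h}$.

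The only point I expect to require genuine care is the branching analysis: one must verify, from (\ref{s14}) and the ``loss of freedom'' recorded in (f5)--(f6), that $\Omega^*$ nonetheless branches uniformly finitely, so that $N_0$ may be chosen independent of $\sigma$. Everything else is a geometric-series estimate, and the precise value of $H_{2,r}$ plays no role in this lemma beyond the inequality $H_{2,r}>1$.
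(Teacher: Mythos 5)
Your proof is correct and follows essentially the same route as the paper: a depth bound $|\omega|\le|\sigma|+H$ from $\overline\eta_r<1$ and $H_{2,r}>1$, a per-level bound on $\sum\mathcal E_r$ proved by induction over the $\flat$-children, and a finite geometric sum. The only (immaterial) difference is that the paper sums the children's $\mathcal E_r$-values exactly to get the sharper per-level factor $\xi_r:=\max_{j\in G_y}\sum_{i\in G_{x,j}}(p_{ij}/q_j)^{s_r/(s_r+r)}$, whereas you use the cruder count-times-maximum bound $N_0=mn$; since the depth is bounded, both yield a valid $H_{3,r}$.
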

\begin{proof}
Let $\overline{\eta}_r$ be as defined in Section 3.
Write
\begin{eqnarray*}
&\Lambda(\sigma,h):=\{\omega\in\Omega^*:|\omega|=|\sigma|+h,\sigma\prec\omega\},\;h\geq 1;\\
&M_r:=\min\{h\in\mathbb{N}:\overline{\eta}_r^{\frac{hs_r}{s_r+r}}<H_{2,r}^{-1}\}.
\end{eqnarray*}
Then for every $\omega\in\Lambda(\sigma,M_r)$, by (\ref{g5}), we have
\[
\mathcal{E}_r(\omega)\leq\overline{\eta}_r^{\frac{M_rs_r}{s_r+r}}\mathcal{E}_r(\sigma)<H_{2,r}^{-1}\mathcal{E}_r(\sigma).
\]
Hence, for every $\omega\in S_2(\sigma)$, we have $|\omega|\leq|\sigma|+M_r$. It follows that
\[
S_2(\sigma)\subset\bigcup_{h=0}^M\Lambda(\sigma,h).
\]
Note that $0<Q_r\leq1$. By (\ref{s14}), we also have
\begin{eqnarray*}
\sum_{\omega\in\Lambda(\tau,1)}\mathcal{E}_r(\omega)&\leq& Q_r\sum_{i\in G_{x,j_{\ell(k)+1}}}\bigg(\frac{p_{ij_{\ell(k)+1}}}{q_{j_{\ell(k)+1}}}\bigg)^{\frac{s_r}{s_r+r}}\mathcal{E}_r(\tau)
\\&\leq&\max_{j\in G_y}\sum_{i\in G_{x,j}}\bigg(\frac{p_{ij}}{q_j}\bigg)^{\frac{s_r}{s_r+r}}\mathcal{E}_r(\tau)=:\xi_r\mathcal{E}_r(\tau).
\end{eqnarray*}
Using this fact and finite induction, we further deduce
\begin{eqnarray*}
\sum_{\omega\in S_2(\sigma)}\mathcal{E}_r(\omega)\leq \sum_{h=0}^{M_r}\sum_{\omega\in \Lambda(\tau,h)}\mathcal{E}_r(\omega)\leq \sum_{h=0}^{M_r}\xi_r^h\mathcal{E}_r(\sigma).
\end{eqnarray*}
Setting $H_{3,r}:=\sum_{h=0}^{M_r}\xi_r^h$, the lemma follows.
\end{proof}

Using Lemma \ref{pre1} and the product measure $W$, we are now able to prove the positivity of the lower quantization coefficient for $\mu$.
\begin{proposition}\label{pre3}
Let $\mu$ be a measure as defined in (\ref{selfaffinemeas}). Then $\overline{Q}_r^{s_r}(\mu)>0$.
\end{proposition}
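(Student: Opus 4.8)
The plan is to establish a lower bound of the form $\sum_{\sigma\in\Upsilon_{j,r}}\mathcal{E}_r(\sigma)\geq c>0$ for all $j$, where $c$ is independent of $j$, and then to transfer this to a lower bound for $\overline{Q}_r^{s_r}(\mu)$ via the characterization (\ref{characterization}) and H\"older's inequality, in the spirit of (\ref{z1})--(\ref{g7}) but with the inequalities reversed. The key is that the auxiliary product measure $W$ has total mass one, so if we can cover $G^{\mathbb{N}}\times G_y^{\mathbb{N}}$ (up to $W$-null sets) by the sets $P(\sigma\times\omega)$ associated to words $\sigma\times\omega$ whose $\mathcal{E}_r$-weight is comparable to $\underline{\eta}_r^{js_r/(s_r+r)}$, then the number of such words times that weight is bounded below, and—after controlling overlaps via Lemma \ref{pre2}—so is $\sum_{\sigma\in\Upsilon_{j,r}}\mathcal{E}_r(\sigma)$.

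Concretely, first I would define, for each $j$, a maximal antichain $\Gamma_{j,r}\subset\mathcal{H}_{j,r}$ consisting of words $\sigma\times\omega$ satisfying $W(P(\sigma\times\omega))<\underline{\eta}_r^{j s_r/(s_r+r)}\leq W(P((\sigma\times\omega)^\flat))$ (mirroring the definition of $\Upsilon_{j,r}$ in (\ref{newanti})), noting that by (\ref{s3}) $W(P(\sigma\times\omega))$ is comparable to $\mathcal{E}_r(\sigma\ast\omega\ ?)$—more precisely, $W([\sigma_a]\times[\sigma_b])$ and $\mathcal{E}_r(\sigma)$ differ only by the bounded factor $P_r^{-\ell(k)}Q_r^{-(k-\ell(k))}\in[1,P_rQ_r^{-1}]$. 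Since $W$ is a probability measure and the sets $P(\sigma\times\omega)$, $\sigma\times\omega\in\Gamma_{j,r}$, partition $G^{\mathbb{N}}\times G_y^{\mathbb{N}}$ by (\ref{s15}), we get $\sum_{\sigma\times\omega\in\Gamma_{j,r}}W(P(\sigma\times\omega))=1$, hence $\mathrm{card}(\Gamma_{j,r})\geq P_r^{-1}Q_r\,\underline{\eta}_r^{-js_r/(s_r+r)}$ and also $\sum_{\sigma\times\omega\in\Gamma_{j,r}}\mathcal{E}_r(\sigma\ast\omega)\geq P_r^{-1}Q_r$.

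Next, each word $\sigma\times\omega\in\Gamma_{j,r}$ corresponds to an approximate square $F_{\sigma\ast\omega}$ (reading off the location code of order $|\sigma|+|\omega|+k_{1j}$ that extends a fixed word of length $k_{1j}$ in $\Upsilon_{j,r}$, shifted appropriately), but this approximate square need not lie in $\Upsilon_{j,r}$: its $\mathcal{E}_r$-value is only comparable, not equal, to $\underline{\eta}_r^{js_r/(s_r+r)}$, and distinct words in $\Gamma_{j,r}$ may map to nested approximate squares. Here is where Lemma \ref{pre2} enters: for each such approximate square $\tau$, there is a unique ancestor $\tau'\in\Upsilon_{j,r}$ with $\tau'\prec\tau$, and by the $\mathcal{E}_r$-comparability we have $\tau\in S_2(\tau')$ (with $H_{2,r}$ as in Lemma \ref{pre2}, after checking the constant works), so $\sum$ of $\mathcal{E}_r$ over all $\tau$ mapping into a fixed $\tau'\in\Upsilon_{j,r}$ is at most $H_{3,r}\mathcal{E}_r(\tau')$. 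Summing over $\tau'\in\Upsilon_{j,r}$ then gives
\[
P_r^{-1}Q_r\leq\sum_{\sigma\times\omega\in\Gamma_{j,r}}\mathcal{E}_r(\sigma\ast\omega)\leq H_{3,r}\sum_{\tau'\in\Upsilon_{j,r}}\mathcal{E}_r(\tau'),
\]
so $\sum_{\tau'\in\Upsilon_{j,r}}\mathcal{E}_r(\tau')\geq H_{3,r}^{-1}P_r^{-1}Q_r=:c>0$.

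Finally I would run the argument of (\ref{z1})--(\ref{g7}) in reverse: by H\"older's inequality with exponent $s_r/(s_r+r)<1$ applied to $\sum_{\tau'\in\Upsilon_{j,r}}\mu_{\tau'}m^{-|\tau'|r}=\sum_{\tau'\in\Upsilon_{j,r}}\mathcal{E}_r(\tau')(\mu_{\tau'}m^{-|\tau'|r})^{r/(s_r+r)}$, together with $\mu_{\tau'}m^{-|\tau'|r}>\underline{\eta}_r^{j}$ (lower bound in (\ref{newanti})) and $\psi_{j,r}\underline{\eta}_r^{j\cdot}\asymp\mathrm{something}$—more carefully, using $\mathrm{card}(\Upsilon_{j,r})=\psi_{j,r}$ and the two-sided bound on $\mu_{\tau'}m^{-|\tau'|r}$—one gets $e_{\psi_{j,r},r}^r(\mu)\gtrsim\psi_{j,r}^{-r/s_r}$, and then $\psi_{j+1,r}\leq(mn)^{H_{1,r}}\psi_{j,r}$ lets one pass from the subsequence $(\psi_{j,r})$ to all $k$, yielding $\overline{Q}_r^{s_r}(\mu)>0$. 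The main obstacle is the bookkeeping in the middle step: one must verify that the map from $\Gamma_{j,r}$ (words in $\mathcal{H}_{j,r}$, i.e.\ genuine product words) to approximate squares is well-defined and surjective onto a set of approximate squares each of which has an ancestor in $\Upsilon_{j,r}$, and that the overlap multiplicity is correctly absorbed by $H_{3,r}$—in other words, reconciling the "product-space freedom" of $\mathcal{H}_{j,r}$ (which, unlike approximate squares per (f5), does enjoy enough freedom) with the rigid combinatorics of $\Omega^*$. This is precisely the delicate point flagged in Section 1 and in remarks (f5)--(f6), and it is what the construction of the sets $\mathcal{L}_{j,r}(2)$ promised there is designed to handle.
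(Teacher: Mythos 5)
Your overall architecture is the paper's: exploit that the product measure $W$ has total mass one, cut a maximal antichain in $\mathcal{H}_{j,r}$ at the level $\underline{\eta}_r^{js_r/(s_r+r)}$, convert its elements into approximate squares, absorb the resulting nesting via Lemma \ref{pre2}, and finish with the reverse H\"older inequality and the bound $\psi_{j+1,r}\leq(mn)^{H_{1,r}}\psi_{j,r}$. But the middle step --- which you yourself flag as the main obstacle --- has a genuine gap, and it is precisely the point the paper's construction of $\Gamma(\tau)$, $\mathcal{L}_{j,r}(1)$ and $\mathcal{L}_{j,r}(2)$ is built to handle.

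A word $\sigma\times\omega\in\mathcal{H}_{j,r}$ is not by itself a location code: the defining constraint is $|\sigma|+\ell(k_{1j})=\ell(|\sigma|+|\omega|+k_{1j})$, so $\sigma\ast\omega$ only becomes admissible after being prefixed by some $\tau\in\Omega_{k_{1j}}$, as $(\tau_a\ast\sigma)\ast(\tau_b\ast\omega)$. Once you prefix, your single antichain $\Gamma_{j,r}$, cut uniformly at $W(P(\sigma\times\omega))<\underline{\eta}_r^{js_r/(s_r+r)}$, no longer does what you need. If you prefix by one fixed $\tau$, the total $W$-mass covered is $W([\tau_a]\times[\tau_b])$, which tends to $0$ as $j\to\infty$, so the lower bound degenerates. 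If instead you prefix by every $\tau\in\Omega_{k_{1j}}$ to recover total mass one, the resulting squares have $\mathcal{E}_r\asymp\mathcal{E}_r(\tau)\cdot\underline{\eta}_r^{js_r/(s_r+r)}$, i.e.\ they sit at depth roughly $2j$ rather than $j$; their $\mathcal{E}_r$-ratio to the ancestor $\tau'\in\Upsilon_{j,r}$ is then of order $\underline{\eta}_r^{js_r/(s_r+r)}$, eventually far below the fixed constant $H_{2,r}^{-1}$ of (\ref{s6}), so these squares are not in $S_2(\tau')$ and Lemma \ref{pre2} does not bound the multiplicity; the inequality $\sum_{\Gamma_{j,r}}\mathcal{E}_r\leq H_{3,r}\sum_{\Upsilon_{j,r}}\mathcal{E}_r$ is therefore unjustified. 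The repair is to normalize the antichain relative to each prefix: one cuts $\Gamma(\tau)$ at the $\tau$-dependent level $\epsilon(\tau)=\underline{\eta}_r^{js_r/(s_r+r)}\mathcal{E}_r(\tau)^{-1}$, so that $\sum_{\tau}\sum_{\sigma\times\omega\in\Gamma(\tau)}W([\tau_a\ast\sigma]\times[\tau_b\ast\omega])=\sum_\tau W([\tau_a]\times[\tau_b])=1$ while every produced square lands at the correct depth (\ref{s9}). A second, smaller wrinkle: your target $\sum_{\tau'\in\Upsilon_{j,r}}\mathcal{E}_r(\tau')\geq c$ presumes every produced square has an ancestor in $\Upsilon_{j,r}$, but (\ref{s9}) only gives $\mathcal{E}_r(\rho)<P_rQ_r^{-1}\underline{\eta}_r^{js_r/(s_r+r)}$ with $P_rQ_r^{-1}\geq1$, so some $\rho$ may lie above the antichain $\Upsilon_{j,r}$. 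The paper sidesteps both issues by de-nesting $\mathcal{L}_{j,r}(1)$ within itself (yielding $\mathcal{L}_{j,r}(2)$) and then lower-bounding $e_{\phi_{j,r},r}^r(\mu)$ directly through the separation argument of Lemmas 2--3 of \cite{KZ:15}, since (\ref{characterization}) is stated only for $\Upsilon_{j,r}$; if you insist on landing back in $\Upsilon_{j,r}$ you must supply the analogous packing estimate or verify the ancestor map and the constant $H_{2,r}$ in detail.
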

\begin{proof}
For every $\tau\in\Omega_{k_{1j}}\setminus\Lambda_{j,r}(k_{1j})$, by (\ref{newanti}), we have $\mathcal{E}_r(\tau)\geq\underline{\eta}_r^{\frac{js_r}{s_r+r}}$. Set
\[
\epsilon(\tau):=\underline{\eta}_r^{\frac{js_r}{s_r+r}}\mathcal{E}_r(\tau)^{-1}.
\]
Then clearly $\epsilon(\tau)\leq 1$ for all $\tau\in\Omega_{k_{1j}}\setminus\Lambda_{j,r}(k_{1j})$. We define
\begin{eqnarray*}
\Gamma(\tau):=\{\sigma\times\omega\in\mathcal{H}_{j,r}:W(P((\sigma\times\omega)^\flat)\geq\epsilon(\tau)>W(P(\sigma\times\omega)\}.
\end{eqnarray*}
Then $\Gamma(\tau)$ is a finite maximal antichain in $\mathcal{H}_{j,r}$. Using (\ref{s15}), we deduce
\begin{eqnarray*}
\sum_{\sigma\times\omega\in\Gamma(\tau)}W([\tau_a\ast\sigma]\times[\tau_b\ast\omega])&=&\sum_{\sigma\times\omega\in\Gamma(\tau)}
\lambda([\tau_a\ast\sigma])\nu([\tau_b\ast\omega])\\&=&\sum_{\sigma\times\omega\in\Gamma(\tau)}
\lambda([\tau_a])\lambda([\sigma])\nu([\tau_b])\nu([\omega])\\&=&W([\tau_a\ast\tau_b])\sum_{\sigma\times\omega\in\Gamma(\tau)}
W([\sigma]\times[\omega])\\&=&W([\tau_a\ast\tau_b]).
\end{eqnarray*}
We need to note the following facts:
\begin{enumerate}
\item[\rm (A)] For every $\tau\in\Omega_{k_{1j}}\setminus\Lambda_{j,r}(k_{1j})$ and $\sigma\times\omega\in\Gamma(\tau)$, by (\ref{g3}), $\tau_a\ast\sigma\ast\tau_b\ast\omega$ is a location code for an approximate square;
\item[\rm (B)]
For distinct $\sigma^{(i)}\times\omega^{(i)}\in\Gamma(\tau),i=1,2$, we have, either $\sigma^{(1)},\sigma^{(2)}$; or $\omega^{(1)},\omega^{(2)}$ are incomparable. Hence,
\begin{eqnarray*}
(\tau_a\ast\sigma^{(1)})\ast(\tau_b\ast\omega^{(1)})\neq(\tau_a\ast\sigma^{(2)})\ast(\tau_b\ast\omega^{(2)});\\
\big[\tau_a\ast\sigma^{(1)}\big]\times\big[\tau_b\ast\omega^{(1)}\big]\neq\big[\tau_a\ast\sigma^{(2)}]\times[\tau_b\ast\omega^{(2)}\big].
\end{eqnarray*}
\item[\rm (C)]For different $\tau^{(i)}\in\Omega_{k_{1j}}\setminus\Lambda_{j,r}(k_{1j}),i=1,2$, we have
\[
|\tau^{(1)}_a|=|\tau^{(2)}_a|,\;\;|\tau^{(1)}_b|=|\tau^{(2)}_b|.
\]
Since $\tau^{(1)}\neq \tau^{(2)}$, we have either $\tau^{(1)}_a,\tau^{(2)}_a$ are incomparable, or $\tau^{(1)}_b,\tau^{(2)}_b$ are incomparable. Thus, for every pair $\sigma^{(i)}\times\omega^{(i)}\in\Gamma(\tau_i),i=1,2$, we have
\begin{eqnarray*}
(\tau^{(1)}_a\ast\sigma^{(1)})\ast(\tau^{(1)}_b\ast\omega^{(1)})\neq(\tau^{(2)}_a\ast\sigma^{(2)})\ast(\tau^{(2)}_b\ast\omega^{(2)});\\
\big[\tau^{(1)}_a\ast\sigma^{(1)}\big]\times\big[\tau^{(1)}_b\ast\omega^{(1)}\big]\neq\big[\tau^{(2)}_a\ast\sigma^{(2)}]\times[\tau^{(2)}_b\ast\omega^{(2)}\big].
\end{eqnarray*}
\item[\rm (D)]
It may happen that
\[
F_{\tau_a\ast\sigma^{(1)})\ast(\tau_b\ast\omega^{(1)})}\subset F_{\tau_a\ast\sigma^{(2)})\ast(\tau_b\ast\omega^{(2)})}.
\]
\end{enumerate}

We denote by $\mathcal{L}_{j,r}(1)$ the set of all the words $(\tau_a\ast\sigma)\ast(\tau_b\ast\omega)$ and words in $\Lambda_{j,r}(k_{1j})$:
\[
\mathcal{L}_{j,r}(1):=\Lambda_{j,r}(k_{1j})\cup\bigg(\bigcup_{\tau\in\Omega_{k_{1j}}\setminus\Lambda_{j,r}(k_{1j})}
\big\{(\tau_a\ast\sigma)\ast(\tau_b\ast\omega):\sigma\times\omega\in\Gamma(\tau)\big\}\bigg).
\]
For every $\tau\in\Omega_{k_{1j}}\setminus\Lambda_{j,r}(k_{1j})$ and $\sigma\times\omega\in\Gamma(\tau)$, using (\ref{s3}) and (\ref{s8}), we have
\begin{eqnarray*}
\mathcal{E}_r((\tau_a\ast\sigma)\ast(\tau_b\ast\omega))&=&(\mu_{\tau_a\ast\sigma)\ast(\tau_b\ast\omega)}m^{-|\tau_a\ast\sigma)\ast(\tau_b\ast\omega)|r})^{\frac{s_r}{s_r+r}}
\\&\geq& P_r^{-1}Q_r W([\tau_a\ast\sigma]\times[\tau_b\ast\omega])\\&=&P_r^{-1}Q_r W([\tau_a]\times[\tau_b])W([\sigma]\times[\omega])
\\&=&P_r^{-1}Q_r W([\tau_a]\times[\tau_b])W(P(\sigma\times\omega))\\&\geq& P_r^{-1}Q_r\mathcal{E}_r(\tau)W(P(\sigma\times\omega))
\\&\geq&P_r^{-1}Q_r\mathcal{E}_r(\tau)P_r^{-1}\underline{\eta}_r^{\frac{s_r}{s_r+r}}W(P((\sigma\times\omega)^\flat))
\\&\geq& P_r^{-1}Q_r\mathcal{E}_r(\tau) P_r^{-1}\underline{\eta}_r^{\frac{s_r}{s_r+r}}
\underline{\eta}_r^{\frac{js_r}{s_r+r}}\mathcal{E}_r(\tau)^{-1}\\&=&P_r^{-2}Q_r\underline{\eta}_r^{\frac{(j+1)s_r}{s_r+r}}.
\end{eqnarray*}
Analogously, one can see that $\mathcal{E}_r((\tau_a\ast\sigma)\ast(\tau_b\ast\omega))\leq P_rQ_r^{-1}\underline{\eta}_r^{\frac{js_r}{s_r+r}}$. In addition, for every $\tau\in\Lambda_{j,r}(k_{1j})$, by (\ref{newanti}), one can see that
\[
\underline{\eta}_r^{\frac{(j+1)s_r}{s_r+r}}\leq\mathcal{E}_r(\tau)<\underline{\eta}_r^{\frac{js_r}{s_r+r}}.
\]
Thus, for all words $\rho\in\mathcal{L}_{j,r}(1)$, we have
\begin{eqnarray}\label{s9}
P_r^{-2}Q_r\underline{\eta}_r^{\frac{(j+1)s_r}{s_r+r}}\leq\mathcal{E}_r(\rho)<P_r Q_r^{-1}\underline{\eta}_r^{\frac{js_r}{s_r+r}}.
\end{eqnarray}
For every $\rho\in\mathcal{L}_{j,r}(1)$, we write
\[
T(\rho):=\{\omega\in\mathcal{L}_{j,r}(1):F_\omega\subset F_\rho,\;{\rm or}\;F_\rho\subset F_\omega\}.
\]
We choose the shortest word of $T(\sigma)$ and denote $\mathcal{L}_{j,r}(2)$ the set of all such words. Then $F_\sigma,\sigma\in\mathcal{L}_{j,r}(2)$ are pairwise non-overlapping. By Lemma \ref{pre2} and (\ref{s3}),
\begin{eqnarray}\label{s10}
\sum_{\rho\in\mathcal{L}_{j,r}(2)}\mathcal{E}_r(\rho)&\geq& H_{3,r}^{-1}\sum_{\rho\in\mathcal{L}_{j,r}(2)}\sum_{\omega\in T(\rho)}\mathcal{E}_r(\omega)=H_{3,r}^{-1}\sum_{\rho\in\mathcal{L}_{j,r}(1)}\mathcal{E}_r(\rho)\nonumber
\\&\geq&H_{3,r}^{-1} P_r^{-1} Q_r\sum_{\tau\in\Lambda_{j,r}(k_{1j})}W([\tau_a]\times[\tau_b])+\nonumber\\&&+H_{3,r}^{-1}P_r^{-1} Q_r
\sum_{\tau\in\Omega_{k_{1j}}\setminus\Lambda_{j,r}(k_{1j})}\sum_{\sigma\times\omega\in\Gamma(\tau)}W([\tau_a\ast\sigma]\times[\tau_b\ast\omega])
\nonumber\\&\geq&H_{3,r}^{-1}P_r^{-1} Q_r\sum_{\tau\in\Lambda_{j,r}(k_{1j})}W([\tau_a]\times[\tau_b])\nonumber\\&&+H_{3,r}^{-1}P_r^{-1} Q_r
\sum_{\tau\in\Omega_{k_{1j}}\setminus\Lambda_{j,r}(k_{1j})}W([\tau_a]\times[\tau_b])
\nonumber\\&=&H_{3,r}^{-1}P_r^{-1} Q_r\sum_{\tau\in\Omega_{k_{1j}}}W([\tau_a]\times[\tau_b])=H_{3,r}^{-1}P_r^{-1} Q_r.
\end{eqnarray}
Analogously, one may show that
\begin{eqnarray}\label{s11}
\sum_{\rho\in\mathcal{L}_{j,r}(2)}\mathcal{E}_r(\rho)\leq 1.
\end{eqnarray}
We denote by $\phi_{j,r}$ the cardinality of $\mathcal{L}_{j,r}(2)$. Then by (\ref{s9})-(\ref{s11}), we deduce
\[
\phi_{j,r}P_r^{-2}Q_r\underline{\eta}_r^{\frac{s_r(j+1)}{s_r+r}}\leq 1;\;\phi_{j,r}P_r Q_r^{-1}\underline{\eta}_r^{\frac{js_r}{s_r+r}}\geq H_{3,r}^{-1}P_r^{-1} Q_r.
\]
Set $H_{4,r}:=P_r^2Q_r^{-1}$ and $H_{5,r}:=H_{3,r}^{-1}P_r^{-2}Q_r^2$. It follows that
\begin{eqnarray}\label{s12}
H_{5,r}\underline{\eta}_r^{\frac{-js_r}{s_r+r}}\leq \phi_{j,r}\leq H_{4,r}\underline{\eta}_r^{\frac{-s_r(j+1)}{s_r+r}}
\end{eqnarray}

Now let $H:=2([\theta^{-1}]+2)$ and $\delta:=(n^2+1)^{-1/2}$. Using the method in the proof of Lemma 2 of \cite{KZ:15}, we may choose a $\widetilde{\rho}$ for every word $\rho\in\mathcal{L}_{j,r}(2)$ such that
\[
\rho\prec\widetilde{\rho},\;\;|\widetilde{\rho}|\leq|\rho|+H
\]
and for every pair of distinct words $\rho,\omega$ of $\mathcal{L}_{j,r}(2)$, we have
\begin{eqnarray*}
d(F_{\widetilde{\rho}},F_{\widetilde{\omega}})\geq\delta\max\{|F_{\widetilde{\rho}}|,|F_{\widetilde{\omega}}|\}.
\end{eqnarray*}
Let $\alpha\in C_{\phi_{j,r},r}(\mu)$. Then by Lemma 3 of \cite{KZ:15}, we can find a constant $D$, which is independent of $j$, such that
 \begin{eqnarray}\label{g6}
e_{\phi_{j,r},r}^r(\mu)&\geq&\sum_{\rho\in\mathcal{L}_{j,r}(2)}\int_{F_\rho} d(x,\alpha)^rd\mu(x)\geq
\sum_{\rho\in\mathcal{L}_{j,r}(2)}\int_{F_{\widetilde{\rho}}} d(x,\alpha)^rd\mu(x)
\nonumber\\&\geq&D\sum_{\rho\in\mathcal{L}_{j,r}(2)}\mu_{{\widetilde{\rho}}}m^{-|\widetilde{\rho}|r}
\geq\widetilde{D}\sum_{\rho\in\mathcal{L}_{j,r}(2)}\mu_\rho m^{-|\rho|r},
\end{eqnarray}
where $\widetilde{D}:=D\underline{\eta}_r^H$.
Thus, by (\ref{s10}), (\ref{g6}) and H\"{o}lder's inequality with exponent less than one, we further deduce
\begin{eqnarray*}
e_{\phi_{j,r},r}^r(\mu)&\geq&\widetilde{D}\bigg(\sum_{\rho\in\mathcal{L}_{j,r}(2)}(\mu_\rho m^{-|\rho|r})^{\frac{s_r}{s_r+r}}\bigg)^{\frac{s_r+r}{s_r}}\phi_{j,r}^{-\frac{r}{s_r}}
\\&=&\widetilde{D}\bigg(\sum_{\rho\in\mathcal{L}_{j,r}(2)}\mathcal{E}_r(\rho)\bigg)^{\frac{s_r+r}{s_r}}\phi_{j,r}^{-\frac{r}{s_r}}
\\&\geq&\widetilde{D} (H_{3,r}P_r Q_r^{-1})^{-\frac{s_r+r}{s_r}}\phi_{j,r}^{-\frac{r}{s_r}}.
\end{eqnarray*}
By (\ref{s12}), we may choose a smallest integer $H_{6,r}$ such that for every $j$, we have
\[
\phi_{j+H_{6,r},r}\geq H_{5,r}\underline{\eta}_r^{\frac{-(j+H_{6,r})s_r}{s_r+r}}>H_{4,r}\underline{\eta}_r^{\frac{-s_r(j+1)}{s_r+r}}\geq\phi_{j,r}
\]
For this integer $H_{6,r}$ and $j\geq 1$, we also have
\begin{eqnarray*}
\phi_{j+H_{6,r},r}&\leq& H_{4,r}\underline{\eta}_r^{\frac{-(j+H_{6,r}+1)s_r}{s_r+r}}= H_{4,r}\underline{\eta}_r^{\frac{-(H_{6,r}+1)s_r}{s_r+r}}\underline{\eta}_r^{\frac{-js_r}{s_r+r}}\\&\leq& H_{5,r}^{-1}H_{4,r}\underline{\eta}_r^{\frac{-(H_{6,r}+1)s_r}{s_r+r}}\phi_{j,r}.
\end{eqnarray*}
We set $N_{j,r}:=\phi_{[\theta^{-1}+jH_{6,r}],r}$ and $H_{7,r}:=H_{5,r}^{-1}H_{4,r}\underline{\eta}_r^{\frac{-(H_{6,r}+1)s_r}{s_r+r}}$. Then we have
\[
N_{j,r}<N_{j+1,r}\leq H_{7,r} N_{j,r},\;\;N_{j,r}^{\frac{r}{s_r}}e_{N_{j,r}}^r(\mu)\geq\widetilde{D} (H_{3,r}P_r Q_r^{-1})^{-\frac{s_r+r}{s_r}}.
\]
For each $k\geq\phi_1$, we choose $j$ such that $k\in[ N_{j,r}, N_{j+1,r})$. Then using Theorem 4.12 of \cite{GL:00}, we deduce
\begin{eqnarray*}
\underline{Q}_r^{s_r}(\mu)&=&\liminf_{k\to\infty}k^{\frac{r}{s_r}}e_{k,r}^r(\mu)\geq\liminf_{j\to\infty} N_{j,r}^{\frac{r}{s_r}}
e_{ N_{j+1,r},r}^r(\mu)\\&\geq& (H_{7,r})^{-\frac{r}{s_r}}\liminf_{j\to\infty} N_{j+1,r}^{\frac{r}{s_r}}
e_{ N_{j+1,r},r}^r(\mu)\\&\geq&(H_{7,r})^{-\frac{r}{s_r}}\widetilde{D} (H_{3,r}P_r Q_r^{-1})^{-\frac{s_r+r}{s_r}}.
\end{eqnarray*}
This completes the proof of the proposition.
\end{proof}

\emph{Proof of Theorem \ref{mthm1}}
It is an immediate consequence of Proposition \ref{pre1} and \ref{pre3}.

\section*{References}

\end{document}